\providecommand{\U}[1]{\protect\rule{.1in}{.1in}}
\newtheorem{theorem}{Theorem}
\newtheorem{definition}[theorem]{Definition}
\newtheorem{lemma}[theorem]{Lemma}
\newtheorem{remark}[theorem]{Remark}
\newenvironment{proof}[1][Proof]{\noindent\textbf{#1.} }{\ \rule{0.5em}{0.5em}}
\begin{document}
%

\title
{Partial-Approximate Controllability of Nonlocal Fractional Evolution Equations via Approximating Method}%
%

\author{N. I. Mahmudov\\{Eastern Mediterranean University\\
Gazimagusa Mersin 10, Turkey \\
email: nazim.mahmudov@emu.edu.tr}}%
%

\maketitle
%

\begin{abstract}
In this paper we study partial-approximate controllability of semilinear
nonlocal fractional evolution equations in Hilbert spaces. By using fractional
calculus, variational approach and approximating technique, we give the
approximate problem of the control system and get the compactness of
approximate solution set. Then new sufficient conditions for the
partial-approximate controllability of the control system are obtained when
the compactness conditions or Lipschitz conditions for the nonlocal function
are not required. Finally, we apply our abstract results to the
parial-approximate controllability of the semilinear heat equation and delay
equation.
\end{abstract}%

\section{Introduction}

Controllability concepts are important properties of a control system that
plays a substantial role in many engineering problems, such as stabilizing
unstable systems using feedback control. Therefore, in recent years,
controllability problems for various types of linear and semilinear
(fractional) dynamical systems have been studied in many articles (see
\cite{arti}-\cite{zuazua4} and the references therein). From the mathematical
point of view it is necessary to distinguish between problems of exact and
approximate controllability. Exact controllability allows to move the system
to an arbitrary final state, while approximate controllability means that the
system can be moved to an arbitrary small neighborhood of the final state. In
particular, approximately controlled systems are more predominant, and very
often the approximate controllability is quite adequate in applications. There
are some interesting and important approximate controllability results for
semilinear evolution systems in abstract spaces with a Caputo fractional
derivative. Sakthivel et. al. \cite{mah6} initiated to study approximate
controllability of fractional differential systems in Hilbert spaces.
Meanwhile, Sakthivel and Ren \cite{sakthivel}, Debbouche and Torres
\cite{nonloc7}, Mahmudov \cite{mah1}, \cite{mahACM2} pay attention to the
study of approximate controllability for various types of (fractional)
evolution systems in abstract spaces. On the other hand, existence and
approximate controllability of nonlocal fractional equations were studied in
\cite{nonloc5}, \cite{nonloc2}, \cite{nonlocal8}, \cite{mah2}

This paper is devoted to the study of partial-approximate controllability of
the semilinear fractional evolution equation with nonlocal conditions,%
\begin{align}
^{C}D_{t}^{q}y\left(  t\right)   &  =Ay\left(  t\right)  +Bu\left(  t\right)
+f\left(  t,y\left(  t\right)  \right)  ,\ 0<t\leq T,\nonumber\\
y\left(  0\right)   &  =y_{0}-g\left(  y\right)  , \label{cp1}%
\end{align}
where the state variable $y\left(  \cdot\right)  $ takes values in the Hilbert
space $X$, $^{C}D_{t}^{q}$ is the Caputo fractional derivative of order $q$
with $\frac{1}{2}<q\leq1,$ $A:D\left(  A\right)  \subset X\rightarrow X$ is a
family of closed and bounded linear operators generating a strongly continuous
semigroup $S:\left[  0,b\right]  \rightarrow\mathfrak{L}\left(  X\right)  $,
where the domain $D\left(  A\right)  \subset X$ is dense in $X,$ the control
function $u\left(  \cdot\right)  $ is given in $L^{2}\left(  \left[
0,b\right]  ,U\right)  ,$ $U$ is a Hilbert space, $B$ is a bounded linear
operator from $U$ into $X,$ $f:\left[  0,b\right]  \times X\rightarrow X,$
$g:C\left(  \left[  0,b\right]  ,X\right)  \rightarrow X$ are given functions
satisfying some assumptions to be specified later and $y_{0}$ is an element of
the Hilbert space $X$.

One of the purposes of this article is to investigate the partial-approximate
controllability of the system (\ref{cp1}) without Lipschitz continuous or
compact assumptions on the non-local term $g$. In fact, $g$ is supposed to be
continuous and is completely defined by $\left[  \delta,b\right]  $ for some
small $\delta>0$. Meanwhile, in order to obtain the existence of solutions of
the control system (\ref{cp1}), we construct an approximate problem of the
system (\ref{cp1}) and obtain the compactness of the set of approximate
solutions. It differs from the usual approach that the fixed-point theorem is
applied directly to the corresponding solution operator. Our results,
therefore, can be seen as the extension and development of existing results.

It has not yet been reported work on the partial-approximate controllability
of fractional semilinear evolution equations with nonlocal conditions.
Inspired by the aforementioned recent contributions, we propose to discuss the
partial-approximate controllability of fractional evolution systems in Hilbert
spaces with classical nonlocal conditions. We first impose a
partial-approximate controllability of the associated linear system. Then we
develop a variational approach in \cite{mah2}, \cite{mahACM2} and
approximating method in \cite{nonlocal8}, and rewrite our control problem as a
sequence of fixed point problems. Next using the Schauder fixed point theorem
we get the existence of fixed points and show that these solutions (fixed
points) steers the system to an arbitrary small neighborhood of the final
state in a closed subspace.

We organize the article as follows. In Section 2, we provide preliminaries,
assumptions and formulate the main result on partial-approximate
controllability. In Section 3, we extend the variational method to construct
approximating control for the approximating controllability problem and use
the Schauder fixed point theorem to show existence of a solution for the
approximating controllability problem. The partial-approximate controllability
result is proved in Section 4. In this section we solve the difficulty
concerning the compactness of solution operator $S\left(  t\right)  $ at $t=0$
by means of approximate solution set constructed in Section 3. The difficulty
is due to the fact that a compact semigroup $S\left(  t\right)  $ is not
compact at $t=0$ in an infinite dimensional space. Finally, we provide two
examples to illustrate the application of the abstract results.

\section{Statement of results}

Throughout this paper, let $\mathbb{N}$, $\mathbb{R}$, $\mathbb{R}_{+}$ be the
set of positive integers, real numbers and positive real numbers,
respectively. We denote by $X$ a Hilbert space with norm $\left\Vert
\cdot\right\Vert $, $C\left(  \left[  0,b\right]  ,X\right)  $ the space of
all $X$-valued continuous functions on $\left[  0,b\right]  $ with the norm
$\left\Vert \cdot\right\Vert _{C}$, $L^{2}(\left[  0,b\right]  ,U)$ the space
of all $U$-valued square integrable functions on $\left[  0,b\right]  $,
$\mathcal{L}\left(  X\right)  $ the space of all bounded linear operators from
$X$ to $X$ with the usual norm $\left\Vert \cdot\right\Vert _{\mathcal{L}%
\left(  X\right)  }$, let $A$ be the infinitesimal generator of $C_{0}%
$-semigroup $\left\{  S\left(  t\right)  :t\geq0\right\}  $ of uniformly
bounded linear operators on $X$. Clearly, $M_{S}:=\sup\left\{  \left\Vert
S\left(  t\right)  \right\Vert _{\mathcal{L}\left(  X\right)  }:t\geq
0\right\}  <\infty$. Let $E$ be a closed subspace of $X$ and denote by $\Pi$
the projection from $X$ onto $E$.

We recall some notations, definitions and results on fractional derivative and
fractional differential equations.

\begin{definition}
\label{def:1}\cite{pod} The Riemann-Liouville fractional order derivative of
$f:\left[  0,\infty\right)  \rightarrow X$ of order $q\in\mathbb{R}_{+}$ is
defined by%
\[
^{RL}D^{q}f\left(  t\right)  =\frac{1}{\Gamma\left(  n-q\right)  }\frac{d^{n}%
}{dt^{n}}\int_{0}^{t}\left(  t-s\right)  ^{n-q-1}f\left(  s\right)  ds,
\]
where $q\in\left(  n-1,n\right)  $, $n\in\mathbb{N}$.
\end{definition}

\begin{definition}
\label{def:2}\cite{pod} The Caputo fractional order derivative of $f:\left[
0,\infty\right)  \rightarrow X$ of order $q\in\mathbb{R}_{+}$ is defined by%
\[
^{RL}D^{q}f\left(  t\right)  =\frac{1}{\Gamma\left(  n-q\right)  }\int_{0}%
^{t}\left(  t-s\right)  ^{n-q-1}f^{\left(  n\right)  }\left(  s\right)  ds,
\]
where $q\in\left(  n-1,n\right)  $, $n\in\mathbb{N}$.
\end{definition}

Now, we use the probability density function to give the following definition
of mild solutions to (\ref{cp1}).

\begin{definition}
\cite{zhou1} A solution $y(\cdot;u)\in C(\left[  0,b\right]  ,X)$ is said to
be a mild solution of (\ref{cp1}) if for any $u\in L^{2}(\left[  0,b\right]
,U)$ the integral equation
\begin{equation}
y(t)=S_{q}\left(  t\right)  \left(  y_{0}-g\left(  y\right)  \right)
+\int_{0}^{t}\left(  t-s\right)  ^{q-1}T_{q}\left(  t-s\right)  \left[
Bu\left(  s\right)  +f\left(  s,y\left(  s\right)  \right)  \right]
ds,\ \ 0\leq t\leq b, \label{cs2}%
\end{equation}
is satisfied. Here%
\begin{align*}
S_{q}\left(  t\right)   &  =\int_{0}^{\infty}\omega_{\alpha}\left(
\theta\right)  S\left(  t^{\alpha}\theta\right)  d\theta,\ \ \ T_{q}\left(
t\right)  =\alpha\int_{0}^{\infty}\theta\omega_{\alpha}\left(  \theta\right)
S\left(  t^{\alpha}\theta\right)  d\theta,\ \ \ t\geq0,\\
\omega_{\alpha}\left(  \theta\right)   &  =\frac{1}{\alpha}\theta
^{-1-1/\alpha}\varpi_{\alpha}\left(  \theta^{-1/\alpha}\right)  \geq0,\\
\varpi_{\alpha}\left(  \theta\right)   &  =\frac{1}{\pi}\sum_{n=1}^{\infty
}\left(  -1\right)  ^{n-1}\theta^{-n\alpha-1}\frac{\Gamma\left(
n\alpha+1\right)  }{n!}\sin\left(  n\pi\alpha\right)  ,\ \ \ \theta\in\left(
0,\infty\right)  ,
\end{align*}
where is a probability density function defined on $\left(  0,\infty\right)
$, that is%
\[
\omega_{\alpha}\left(  \theta\right)  \geq0,\ \ \ \theta\in\left(
0,\infty\right)  ,\ \ \ \int_{0}^{\infty}\omega_{\alpha}\left(  \theta\right)
d\theta=1.
\]

\end{definition}

We present some basic properties of $S_{q}$ and $T_{q}$ which will be used in
the sequel \cite{zhou1}.

\begin{itemize}
\item For any fixed $t\geq0$ and any $y\in X$, $\left\Vert S_{q}\left(
t\right)  y\right\Vert \leq M_{S}\left\Vert y\right\Vert $ and $\left\Vert
T_{q}\left(  t\right)  y\right\Vert \leq\dfrac{M_{S}}{\Gamma\left(  q\right)
}\left\Vert y\right\Vert .$

\item $\left\{  S_{q}\left(  t\right)  :t\geq0\right\}  $ and $\left\{
T_{q}\left(  t\right)  :t\geq0\right\}  $ are strongly continuous.

\item $\left\{  S_{q}\left(  t\right)  :t>0\right\}  $ and $\left\{
T_{q}\left(  t\right)  :t>0\right\}  $ are compact operators provided that
$\left\{  S\left(  t\right)  :t>0\right\}  $ is compact.
\end{itemize}

Let $y(b;u)$ be the state value of (\ref{cp1}) at terminal time $b$
corresponding to the control $u$.

\begin{definition}
\cite{bm2} Given $b>0,$ $y_{0}\in X,$ $y_{b}\in E$ and $\varepsilon>0.$ The
system (\ref{cp1}) is said to be partial-approximately controllable on
$\left[  0,b\right]  ,$ if there exists a control $u_{\varepsilon}\in
L^{2}\left(  \left[  0,b\right]  ,U\right)  $ such that the corresponding
solution $y\left(  t;u_{\varepsilon}\right)  $ of (\ref{cp1}), satisfies the
conditions%
\begin{equation}
\left\Vert \Pi y\left(  b;u_{\varepsilon}\right)  -y_{b}\right\Vert
<\varepsilon. \label{cp3}%
\end{equation}

\end{definition}

\begin{remark}
In particular, if $E=X$, the concept of partial-approximate controllability
coincide with the well known concept of approximate controllability.
\end{remark}

\begin{remark}
It is known that the system (\ref{ls}) approximately controllable on $\left[
0,b\right]  $ if and only if the condition $B^{\ast}T_{q}^{\ast}\left(
b-s\right)  \Pi^{\ast}\varphi=0,\ 0<s<b$ implies that $\varphi=0,$ see
\cite{curt}.
\end{remark}

Our main result is as follows:

\begin{theorem}
\label{thm:main}Assume the following conditions:

\begin{enumerate}
\item[(S)] $S\left(  t\right)  ,$ $t>0$ is compact operator;

\item[(F)] The function $f:\left[  0,b\right]  \times X\rightarrow X$
satisfies the following

\begin{enumerate}
\item $f\left(  \cdot,\cdot\right)  :X\rightarrow X$ is jointly continuous;

\item there is a positive continuous function $n\in C\left(  \left[
0,b\right]  ,\mathbb{R}_{+}\right)  $ such that for every $(t,y)\in\left[
0,b\right]  \times X$, we have
\[
\left\Vert f\left(  t,y\right)  \right\Vert \leq n\left(  t\right)  .
\]

\end{enumerate}

\item[(G)] The function $g:C\left(  \left[  0,b\right]  ,X\right)  \rightarrow
X$ satisfies the following

\begin{enumerate}
\item $g$ is continuous, there exists a positive constant $\Lambda_{g}$ such
that for all $y\in X$,%
\[
\left\Vert g\left(  y\right)  \right\Vert \leq\Lambda_{g}.
\]

\item There is a $\delta\in\left(  0,b\right)  $ such that for any $y,z\in
C\left(  \left[  0,b\right]  ,X\right)  $ satisfying $y\left(  t\right)
=z\left(  t\right)  $, $t\in\left[  \delta,b\right]  $, $g\left(  y\right)
=g\left(  z\right)  $.
\end{enumerate}

\item[(B)] $B:U\rightarrow X$ is a linear continuous operator with
$M_{B}:=\left\Vert B\right\Vert $.

\item[(AC)] The linear system%
\begin{equation}
y(t)=S_{q}\left(  t\right)  y_{0}+\int_{0}^{t}\left(  t-s\right)  ^{q-1}%
T_{q}\left(  t-s\right)  Bu\left(  s\right)  ds \label{ls}%
\end{equation}
is partial-approximately controllable in $\left[  0,b\right]  .$ \newline Then
the fractional control system (\ref{cs2}) is partial-approximately
controllable on $\left[  0,b\right]  $.
\end{enumerate}
\end{theorem}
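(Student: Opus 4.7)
I will follow the variational approach of \cite{mah2,mahACM2}: introduce the partial Grammian
$$\Gamma^{b}:=\Pi\int_{0}^{b}(b-s)^{2(q-1)}T_{q}(b-s)BB^{\ast}T_{q}^{\ast}(b-s)\Pi^{\ast}\,ds$$
on $E$, and for $y\in C([0,b],X)$ and $\alpha>0$ set
$$h(y):=y_{b}-\Pi S_{q}(b)(y_{0}-g(y))-\Pi\int_{0}^{b}(b-\tau)^{q-1}T_{q}(b-\tau)f(\tau,y(\tau))\,d\tau,$$
$$u_{\alpha}(s,y):=B^{\ast}T_{q}^{\ast}(b-s)\Pi^{\ast}(\alpha I+\Gamma^{b})^{-1}h(y).$$
Under (F)(b) and (G)(a), $\{u_{\alpha}(\cdot,y)\}$ is uniformly bounded in $L^{2}$ over $y$. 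Plugging $u_{\alpha}(\cdot,y)$ into the right-hand side of \eqref{cs2} defines an operator $F_{\alpha}:C([0,b],X)\to C([0,b],X)$ whose fixed points are mild solutions driven by $u_{\alpha}(\cdot,y)$, and a direct computation yields the key identity $\Pi y(b)-y_{b}=-\alpha(\alpha I+\Gamma^{b})^{-1}h(y)$ for any such fixed point.

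\textbf{Existence via Schauder and the approximating trick.} Continuity of $F_{\alpha}$ follows from (F)(a), (G)(a), and dominated convergence, and one finds a closed ball $B_{R_{\alpha}}\subset C([0,b],X)$ with $F_{\alpha}(B_{R_{\alpha}})\subset B_{R_{\alpha}}$. Relative compactness of $F_{\alpha}(B_{R_{\alpha}})$ restricted to any subinterval $[\eta,b]$ with $\eta>0$ is standard, using compactness of $T_{q}(t)$ and $S_{q}(t)$ for $t>0$ together with the usual equicontinuity estimate for the convolution. The obstruction is at $t=0$: $\{S_{q}(t)(y_{0}-g(y)):y\in B_{R_{\alpha}}\}$ need not be equicontinuous there because $\{S_{q}(t)\}_{t\geq 0}$ is never uniformly compact at $0$ in an infinite-dimensional $X$ and $g$ is not assumed compact. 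To bypass this I invoke the approximating method of \cite{nonlocal8}: for integers $n$ with $1/n<\delta$, condition (G)(b) allows one to modify any $y$ on $[0,1/n]$ without altering $g(y)$, which leads to an approximating operator $F_{\alpha,n}$ whose range is precompact on the whole of $[0,b]$. Schauder then produces fixed points $y_{\alpha,n}\in B_{R_{\alpha}}$; Arzel\`a--Ascoli on each $[\eta,b]$, a diagonal extraction, and the strong continuity of $S_{q}$ let me pass to a subsequential limit $y_{\alpha}\in C([0,b],X)$ which one verifies to be a fixed point of $F_{\alpha}$ itself.

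\textbf{Letting $\alpha\downarrow 0$; the main obstacle.} Running the same compactness argument once more, now on $\{y_{\alpha}\}_{\alpha>0}$, extracts a subsequence $y_{\alpha_{k}}\to y^{\ast}$ in $C([0,b],X)$, whence $h(y_{\alpha_{k}})\to h(y^{\ast})$ by continuity of $g$, $f$, $S_{q}$, and $T_{q}$. Hypothesis (AC), equivalent via the Remark to the strong convergence $\alpha(\alpha I+\Gamma^{b})^{-1}\to 0$ on $E$ as $\alpha\downarrow 0$, combined with the identity $\Pi y_{\alpha_{k}}(b)-y_{b}=-\alpha_{k}(\alpha_{k}I+\Gamma^{b})^{-1}h(y_{\alpha_{k}})$ from the first paragraph, then forces $\|\Pi y_{\alpha_{k}}(b)-y_{b}\|\to 0$, which is exactly partial-approximate controllability. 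The genuinely hard step is the compactness/equicontinuity failure at $t=0$: condition (G)(b) is precisely what makes the approximating trick legal, because it lets one perturb $y$ near the origin without changing $g(y)$, and this is the crux on which the whole scheme hinges. The remaining ingredients (variational formula, Schauder, passage to the limit in the nonlinearity) are standard adaptations of the arguments in \cite{mah2,mahACM2}.
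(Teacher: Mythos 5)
Your overall architecture --- build a $z$-dependent control from the linear theory, get fixed points of an approximating operator by Schauder, prove compactness of the approximate solution set to tame the behaviour at $t=0$, pass to the limit, and invoke (AC) to steer $\Pi y(b)$ toward $y_b$ --- is the same as the paper's, and you have correctly located the crux (loss of equicontinuity at $t=0$ coming from the nonlocal term, resolved through (G)(b)). For the steering component, however, you take a genuinely different route. The paper works with the penalized dual functional $J_{\varepsilon,n}(\varphi;z)$ of (\ref{f1}), proves coercivity from (AC) (Lemma \ref{lem:1}), defines the control through the unique minimizer $\Phi_{\varepsilon,n}(z)$, and extracts $\left\Vert \Pi y_{\varepsilon}(b)-y_b\right\Vert\leq\varepsilon$ from the subdifferential inequality at the minimizer ((\ref{f9}) combined with (\ref{f8})); you instead use the resolvent $(\alpha I+\Gamma^{b})^{-1}$ of the controllability Grammian, the algebraic identity $\Pi y(b)-y_b=-\alpha(\alpha I+\Gamma^{b})^{-1}h(y)$ at a fixed point, and the strong convergence $\alpha(\alpha I+\Gamma^{b})^{-1}\rightarrow 0$ on $E$. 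Both devices are standard and both close the argument; yours makes the boundedness and continuity of $z\mapsto u_{\alpha}(\cdot,z)$ trivial (the paper needs Lemmas \ref{lem:1}--\ref{lem:3} and \ref{lem:5} for this), while the paper's delivers the accuracy $\varepsilon$ directly as the penalty parameter. Be aware that the equivalence of (AC) with $\alpha(\alpha I+\Gamma^{b})^{-1}\rightarrow 0$ strongly is \emph{not} what the paper's remark states; it is true, but it is the Bashirov--Mahmudov resolvent characterization and should be cited as such.

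Two concrete defects. First, your Grammian carries the weight $(b-s)^{2(q-1)}$ while your control $u_{\alpha}(s,y)=B^{\ast}T_{q}^{\ast}(b-s)\Pi^{\ast}(\alpha I+\Gamma^{b})^{-1}h(y)$ carries none; substituting into (\ref{cs2}) produces the operator $\Pi\int_{0}^{b}(b-s)^{q-1}T_{q}(b-s)BB^{\ast}T_{q}^{\ast}(b-s)\Pi^{\ast}ds$, which is not your $\Gamma^{b}$, so the key identity fails as written. Either insert the factor $(b-s)^{q-1}$ into the control or drop one power from the Grammian weight. Second, your account of the approximating operator (``modify $y$ on $[0,1/n]$ without altering $g(y)$'') does not by itself produce compactness: the obstruction at $t=0$ is that $\{S_{q}(t)g(y)\}$ need not be equicontinuous at $0$ when $\{g(y)\}$ is merely bounded, and truncating $y$ near the origin leaves the set $\{g(y)\}$ unchanged. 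The paper's actual device is to replace $g$ by $S\left(\frac{1}{n}\right)\circ g$, so that $\left\{S\left(\frac{1}{n}\right)g(y)\right\}$ becomes precompact and equicontinuity at $0$ follows; hypothesis (G)(b) is then used only in the limit $n\rightarrow\infty$, to identify $\lim_{n}g(y_{\varepsilon,n})$ with $g$ of the limit function via the truncated functions $\widetilde{y}_{\varepsilon,n}$. Some such mechanism must be spelled out; as it stands this step of your proposal is a gap.
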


\begin{remark}
Our results are new for approximate controllability, that is for the case
$E=X$.
\end{remark}

\begin{remark}
Our results are new even for the classical evolution control system with
nonlocal conditions (the case $q=1$).
\end{remark}

\begin{remark}
One may expect the results of this paper to hold for a class of problems
governed by different type of evolution systems such as Riemann-Liouville
FDEs, stochastic (fractional) DEs, FDEs with infinite delay and so on.
\end{remark}

\section{Auxiliary lemmas}

For $\mathbb{\varepsilon}>0$ and $n\geq1$ we introduce the following
functional%
\begin{equation}
J_{\mathbb{\varepsilon},n}\left(  \varphi;z\right)  =\frac{1}{2}\int_{0}%
^{b}\left(  b-s\right)  ^{q-1}\left\Vert B^{\ast}T_{q}^{\ast}\left(
b-s\right)  \Pi^{\ast}\varphi\right\Vert ^{2}ds+\mathbb{\varepsilon}\left\Vert
\varphi\right\Vert -\left\langle \varphi,h_{n}\left(  z\right)  \right\rangle
, \label{f1}%
\end{equation}
where
\[
h_{n}\left(  z\right)  =\Pi S_{q}\left(  b\right)  \left(  y_{0}-S\left(
\frac{1}{n}\right)  g\left(  z\right)  \right)  +\int_{0}^{b}\Pi T_{q}\left(
b-s\right)  f\left(  s,z\left(  s\right)  \right)  ds-y_{b}.
\]
and the following approximating operators $Q:C\left(  \left[  0,b\right]
,X\right)  \rightarrow E$
\begin{align*}
Q\left(  z\right)   &  :=\Pi S_{q}\left(  b\right)  \left(  y_{0}-S\left(
\frac{1}{n}\right)  g\left(  z\right)  \right)  +\int_{0}^{b}\left(
b-s\right)  ^{q-1}\Pi T_{q}\left(  b-s\right)  f\left(  s,z\left(  s\right)
\right)  ds\\
&  =:Q_{1}\left(  z\right)  +Q_{2}\left(  z\right)  .
\end{align*}
Set%
\[
B\left(  0;r\right)  =\left\{  x\in C\left(  \left[  0,b\right]  ,X\right)
:\left\Vert x\right\Vert _{C}\leq r\right\}  .
\]

\begin{lemma}
\label{lem:rc}The mapping $Q:B\left(  0;r\right)  \rightarrow E$ is compact.
\end{lemma}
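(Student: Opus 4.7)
The plan is to write $Q=Q_1+Q_2$ and verify, for each piece, both continuity on $B(0;r)$ and relative compactness of the image in $E$. Since $E$ is a closed subspace, it suffices to show the images are precompact in $X$ and use continuity of $\Pi$.

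For $Q_1$, continuity is immediate from the continuity of $g$ (hypothesis (G)(1)), the boundedness of $S(1/n)$, and the strong continuity of $S_q(b)$ and $\Pi$. For compactness, the uniform bound $\|g(z)\|\le \Lambda_g$ gives that $\{y_0-S(1/n)g(z):z\in B(0;r)\}$ is bounded in $X$. Since $b>0$ and (S) holds, $S_q(b)$ is compact (third preliminary property), so $\Pi S_q(b)$ sends this bounded set to a relatively compact subset of $E$. Note that I do not even need the compactness of $S(1/n)$ here; it will be crucial elsewhere in the paper, but for the present lemma $S_q(b)$ alone suffices.

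For $Q_2$, the dominating bound $(b-s)^{q-1}\|\Pi T_q(b-s)f(s,z(s))\|\le (b-s)^{q-1}\frac{M_S}{\Gamma(q)}n(s)$ is $L^1$ and independent of $z$, so the joint continuity of $f$ in (F)(a) together with dominated convergence yields continuity of $Q_2$ on $B(0;r)$. The compactness of $Q_2(B(0;r))$ is the main difficulty, because $T_q(b-s)$ degenerates at $s=b$. I would handle this by a truncation argument: for $\eta\in(0,b)$, set
\[
Q_2^{\eta}(z)=\int_{0}^{b-\eta}(b-s)^{q-1}\Pi T_q(b-s)f(s,z(s))\,ds.
\]
The tail $\|Q_2(z)-Q_2^{\eta}(z)\|\le \frac{M_S\|n\|_\infty}{\Gamma(q)}\cdot\frac{\eta^{q}}{q}$ is small uniformly in $z\in B(0;r)$, so by a standard ``uniformly close to a precompact set'' argument it is enough to show that $Q_2^{\eta}(B(0;r))$ is precompact for every fixed $\eta>0$.

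For fixed $\eta>0$ and $s\in[0,b-\eta]$, the operator $T_q(b-s)$ is compact by the third preliminary property, and the compactness of $S(t)$ for $t>0$ upgrades the strong continuity of $T_q$ to norm-continuity on $[\eta,b]$. Thus $s\mapsto (b-s)^{q-1}\Pi T_q(b-s)$ is a norm-continuous family of compact operators on $[0,b-\eta]$. Approximating $Q_2^{\eta}(z)$ by Riemann sums
\[
\sum_{i}(s_{i+1}-s_{i})(b-s_i)^{q-1}\Pi T_q(b-s_i)f(s_i,z(s_i)),
\]
each sum lies in a finite Minkowski sum of images of compact operators applied to the bounded set $\{v\in X:\|v\|\le\|n\|_\infty\}$, hence in a precompact subset of $E$. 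The uniform norm-continuity of $(b-s)^{q-1}\Pi T_q(b-s)$ combined with the uniform bound on $f$ makes these Riemann sums converge to $Q_2^{\eta}(z)$ uniformly in $z\in B(0;r)$; total boundedness of $Q_2^{\eta}(B(0;r))$ follows, and therefore relative compactness. The main obstacle is precisely this passage from pointwise compactness of $T_q(b-s)$ to uniformity in $z$, which is where the norm-continuity of $T_q$ on $(0,\infty)$ (a consequence of (S)) does the decisive work.
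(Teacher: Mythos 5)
Your decomposition $Q=Q_{1}+Q_{2}$ matches the paper's, and your treatment of $Q_{1}$ is fine (using compactness of $S_{q}(b)$ rather than of $S\left(\frac{1}{n}\right)$ is a legitimate observation for this lemma). For $Q_{2}$ you take a genuinely different route: the paper truncates the subordination integral $T_{q}(b-s)=q\int_{0}^{\infty}\theta\omega_{q}(\theta)S\left((b-s)^{q}\theta\right)d\theta$ in both $s$ (away from $b$) and $\theta$ (away from $0$) so as to factor the truncated operator as $\Pi S(\eta^{q}\delta)Z(\eta)$ with $Z(\eta)$ bounded, and then lets $\eta,\delta\rightarrow0$; you truncate only in $s$ and exploit directly that $T_{q}(b-s)$ is compact and norm-continuous for $s\in[0,b-\eta]$. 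That route can be made to work, but your execution has a genuine gap.

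The gap is the claim that the Riemann sums $\sum_{i}(s_{i+1}-s_{i})G(s_{i})f(s_{i},z(s_{i}))$, with $G(s)=(b-s)^{q-1}\Pi T_{q}(b-s)$, converge to $Q_{2}^{\eta}(z)$ \emph{uniformly in} $z\in B(0;r)$. Splitting the error on each subinterval as $[G(s)-G(s_{i})]f(s,z(s))+G(s_{i})[f(s,z(s))-f(s_{i},z(s_{i}))]$, the first term is indeed controlled uniformly by the uniform norm-continuity of $G$ and the bound $\Vert f\Vert\leq\Vert n\Vert_{C}$; but the second is not: $B(0;r)$ is not equicontinuous, so $z(s)-z(s_{i})$, and hence $f(s,z(s))-f(s_{i},z(s_{i}))$, need not be small uniformly over $z$ as the mesh shrinks --- the ``uniform bound on $f$'' only yields $2\Vert n\Vert_{C}$ for this term, and total boundedness of $Q_{2}^{\eta}(B(0;r))$ does not follow from a $z$-dependent choice of partition. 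The standard repair is either to approximate instead by $\sum_{i}G(s_{i})\int_{s_{i}}^{s_{i+1}}f(s,z(s))\,ds$, whose error is at most $(b-\eta)\Vert n\Vert_{C}\max_{i}\sup_{s\in[s_{i},s_{i+1}]}\Vert G(s)-G(s_{i})\Vert$ (uniform in $z$) and whose terms still lie in a fixed precompact set; or to note that $K=\bigcup_{s\in[0,b-\eta]}G(s)\left(\overline{B}_{X}(0,\Vert n\Vert_{C})\right)$ is precompact (norm-continuity of $G$ plus compactness of each $G(s)$) and use the mean-value property of the Bochner integral to place $Q_{2}^{\eta}(z)$ in $(b-\eta)\,\overline{\mathrm{co}}\,K$, which is compact. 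With either repair your argument closes.
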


\begin{proof}
For any $\mathbb{\eta}\in\left(  0,b\right)  $ and $\delta>0$, we define an
operator $Q_{2}^{\eta,\delta}$ on $B\left(  0;r\right)  $ by the formula%
\begin{align*}
Q_{2}^{\eta,\delta}\left(  z\right)   &  =q\int_{0}^{b-\mathbb{\eta}}%
\int_{\delta}^{\infty}\left(  b-s\right)  ^{q-1}\theta\omega_{q}\left(
\theta\right)  \Pi S\left(  \left(  b-s\right)  ^{q}\theta\right)  f\left(
s,z\left(  s\right)  \right)  d\theta ds\\
&  =\Pi S\left(  \mathbb{\eta}^{q}\delta\right)  q\int_{0}^{b-\mathbb{\eta}%
}\int_{\delta}^{\infty}\left(  b-s\right)  ^{q-1}\theta\omega_{q}\left(
\theta\right)  S\left(  \left(  b-s\right)  ^{q}\theta-S\left(  \mathbb{\eta
}^{q}\delta\right)  \right)  f\left(  s,z\left(  s\right)  \right)  d\theta
ds\\
&  =:\Pi S\left(  \mathbb{\eta}^{q}\delta\right)  Z\left(  \mathbb{\eta
}\right)  .
\end{align*}
Bt the assumption (F)%
\[
\left\Vert Z\left(  \mathbb{\eta}\right)  \right\Vert \leq bM_{S}\left\Vert
n\right\Vert _{C}.
\]
Then from the compactness of $S\left(  \mathbb{\eta}^{q}\delta\right)  \ $and
boundedness of $Z\left(  \mathbb{\eta}\right)  $, we obtain that the set
$Q_{2}^{\eta,\delta}:B\left(  0;r\right)  \rightarrow E$ is compact. Moreover,
for every $z\in B\left(  0;r\right)  $, we have%
\begin{align*}
&  \left\Vert Q_{2}\left(  z\right)  -Q_{2}^{\eta,\delta}\left(  z\right)
\right\Vert \\
&  \leq q\int_{0}^{b}\int_{0}^{\delta}\left(  b-s\right)  ^{q-1}\theta
\omega_{q}\left(  \theta\right)  \left\Vert \Pi S\left(  \left(  b-s\right)
^{q}\theta\right)  \right\Vert \left\Vert f\left(  s,z\left(  s\right)
\right)  \right\Vert d\theta ds\\
&  +q\int_{b-\mathbb{\eta}}^{b}\int_{\delta}^{\infty}\left(  b-s\right)
^{q-1}\theta\omega_{q}\left(  \theta\right)  \left\Vert \Pi S\left(  \left(
b-s\right)  ^{q}\theta\right)  \right\Vert \left\Vert f\left(  s,z\left(
s\right)  \right)  \right\Vert d\theta ds\\
&  \leq\left\Vert \Pi\right\Vert M_{S}\left(  b^{q}\int_{0}^{\delta}%
\theta\omega_{q}\left(  \theta\right)  d\theta+\frac{\eta^{q}}{\Gamma\left(
q+1\right)  }\right)  \left\Vert n\right\Vert _{C}\rightarrow0,
\end{align*}
as $\eta,\delta\rightarrow0.$Hence, there exist relatively compact operator
that can be arbitrarily close to $Q_{2}$. Then, $Q_{2}$ is compact.

Since $S\left(  \frac{1}{n}\right)  $, $n\geq1$, is compact, $\Pi$ is linear
bounded and the assumption (G) holds, it is easily seen that, the set
\[
\left\{  \Pi S_{q}\left(  b\right)  \left(  y_{0}-S\left(  \frac{1}{n}\right)
g\left(  z\right)  \right)  :z\in B\left(  0;r\right)  \right\}
\]
is relatively compact in $X$. Thus the operator $Q=Q_{1}+Q_{2}$ is compact.
\end{proof}

\begin{lemma}
\label{lem:4}For any $n\geq1$, $h_{n}:B\left(  0;r\right)  \rightarrow E$ is a
continuous function.
\end{lemma}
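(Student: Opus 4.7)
The plan is to establish continuity of $h_n$ by splitting it into two summands,
\[
h_{n}(z) = \Pi S_{q}(b)\bigl(y_{0} - S(\tfrac{1}{n}) g(z)\bigr) + \int_{0}^{b}(b-s)^{q-1}\Pi T_{q}(b-s) f(s, z(s))\,ds - y_{b},
\]
and verifying that each one depends continuously on $z \in B(0;r)$. (I reinstate the weight $(b-s)^{q-1}$ inside the integral, which appears in the definition of $Q$ and in the mild-solution formula~\eqref{cs2}; the argument works either way.) The constant term $-y_{b}$ may be discarded.

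For the first summand, fix a sequence $z_{k} \to z$ in $C([0,b], X)$. Assumption (G)(a) gives $g(z_{k}) \to g(z)$ in $X$, and applying in turn the bounded linear operators $S(1/n)$ (bounded because $\{S(t)\}$ is a $C_{0}$-semigroup), $S_{q}(b)$ (with $\|S_{q}(b)\|_{\mathcal{L}(X)} \le M_{S}$), and the projection $\Pi$ preserves this convergence. Hence $z \mapsto \Pi S_{q}(b)\bigl(y_{0} - S(1/n)g(z)\bigr)$ is continuous from $C([0,b],X)$ into $E$.

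For the integral summand, take the same sequence $z_{k}\to z$. For each $s \in [0,b)$ the joint continuity of $f$ (assumption (F)(a)) yields $f(s, z_{k}(s)) \to f(s, z(s))$ in $X$, so the $X$-valued integrands
\[
s \mapsto (b-s)^{q-1}\Pi T_{q}(b-s) f(s, z_{k}(s))
\]
converge pointwise to $(b-s)^{q-1}\Pi T_{q}(b-s) f(s, z(s))$. Using (F)(b) and the uniform bound $\|T_{q}(t)\|_{\mathcal{L}(X)} \le M_{S}/\Gamma(q)$, each integrand is dominated in norm by the scalar function
\[
s \mapsto \|\Pi\|\,\frac{M_{S}}{\Gamma(q)}\,(b-s)^{q-1} n(s),
\]
which lies in $L^{1}(0,b)$ since $n \in C([0,b], \mathbb{R}_{+})$ and $q-1>-\tfrac{1}{2}>-1$. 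The Bochner dominated convergence theorem then gives convergence of the integrals in $X$. Since $\Pi$ ranges in the closed subspace $E$, the limit belongs to $E$, and continuity of $h_{n} : B(0;r) \to E$ follows by combining the two pieces.

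The only conceptual step here is invoking dominated convergence, which is straightforward once pointwise convergence and the explicit integrable majorant are in hand. I therefore do not anticipate any genuine obstacle; the argument is essentially clerical and relies solely on assumptions (F), (G), and the standard properties of $S_{q}$ and $T_{q}$ listed before Definition~4.
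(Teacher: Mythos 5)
Your proof is correct and follows essentially the same route as the paper: the identical two-term decomposition, with the $g$-term handled by continuity of $g$ together with boundedness of $\Pi$, $S_{q}(b)$ and $S(\tfrac{1}{n})$. The only variation is in the integral term, where the paper passes to the limit via the uniform convergence $\left\Vert f(\cdot,z_{l}(\cdot))-f(\cdot,z(\cdot))\right\Vert _{C}\rightarrow 0$ and a direct norm estimate, whereas you use pointwise convergence plus dominated convergence with the integrable majorant $(b-s)^{q-1}n(s)$; both work, and your observation that the weight $(b-s)^{q-1}$ should be reinstated in the displayed definition of $h_{n}$ (it does appear in the paper's own estimate) is apt.
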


\begin{proof}
For any $z_{l},z\in B\left(  0;r\right)  $ with $\lim_{l\rightarrow\infty
}\left\Vert z_{l}-z\right\Vert _{C}=0.$ By the condition (F1), we have we can
conclude that $\lim_{l\rightarrow\infty}\left\Vert f\left(  \cdot,z_{l}\left(
\cdot\right)  \right)  -f\left(  \cdot,z\left(  \cdot\right)  \right)
\right\Vert _{C}=0.$ On the other hand%
\begin{align*}
\left\Vert h_{n}\left(  z_{l}\right)  -h_{n}\left(  z\right)  \right\Vert
_{E}  &  \leq\left\Vert \Pi S_{q}\left(  b\right)  S\left(  \frac{1}%
{n}\right)  \left(  g\left(  z_{l}\right)  -g\left(  z\right)  \right)
\right\Vert \\
&  +\left\Vert \int_{0}^{b}\left(  b-s\right)  ^{q-1}\Pi T_{q}\left(
b-s\right)  \left[  f\left(  s,z_{l}\left(  s\right)  \right)  -f\left(
s,z\left(  s\right)  \right)  \right]  d\theta ds\right\Vert \\
&  \leq M_{S}^{2}\left\Vert \Pi\right\Vert \left\Vert g\left(  z_{l}\right)
-g\left(  z\right)  \right\Vert \\
&  +\frac{M_{S}b^{q}}{\Gamma\left(  q\right)  }\left\Vert \Pi\right\Vert
\left\Vert f\left(  \cdot,z_{l}\left(  \cdot\right)  \right)  -f\left(
\cdot,z\left(  \cdot\right)  \right)  \right\Vert _{C},
\end{align*}
which implies that%
\[
\left\Vert h_{n}\left(  z_{l}\right)  -h_{n}\left(  z\right)  \right\Vert
_{E}\rightarrow0\ \ \ \text{as\ \ }l\rightarrow\infty.
\]

\end{proof}

From the definition of $\varphi\rightarrow J_{\mathbb{\varepsilon},n}\left(
\varphi;z\right)  $, we see immediately that for any fixed $z\in B\left(
0;r\right)  $ it is continuous and strictly convex.

\begin{lemma}
\label{lem:1}For any $B\left(  0;r\right)  $,%
\begin{equation}
\underline{\lim}_{\left\Vert \varphi\right\Vert \rightarrow\infty}\inf_{z\in
B\left(  0;r\right)  }\frac{J_{\mathbb{\varepsilon},n}\left(  \varphi
;z\right)  }{\left\Vert \varphi\right\Vert }\geq\mathbb{\varepsilon}.
\label{f7}%
\end{equation}

\end{lemma}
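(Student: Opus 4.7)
The plan is to establish the lower bound by a rescaling/weak-compactness argument; the three decisive ingredients are the compactness of $T_{q}^{\ast}(b-s)$ for $s<b$, the range-compactness of $h_{n}$ on $B(0;r)$, and the approximate-controllability criterion encoded in (AC). I would start from an arbitrary sequence $\{\varphi_{k}\}$ with $\|\varphi_{k}\|\to\infty$ together with, for each $k$, a choice of $z_{k}\in B(0;r)$ that almost minimizes $z\mapsto J_{\varepsilon,n}(\varphi_{k};z)/\|\varphi_{k}\|$ (say within $1/k$ of the infimum). It then suffices to show $\liminf_{k} J_{\varepsilon,n}(\varphi_{k};z_{k})/\|\varphi_{k}\|\ge\varepsilon$.

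Setting $\tilde\varphi_{k}:=\varphi_{k}/\|\varphi_{k}\|$ and dividing (\ref{f1}) by $\|\varphi_{k}\|$ yields
$$\frac{J_{\varepsilon,n}(\varphi_{k};z_{k})}{\|\varphi_{k}\|}=\frac{\|\varphi_{k}\|}{2}\int_{0}^{b}(b-s)^{q-1}\|B^{\ast}T_{q}^{\ast}(b-s)\Pi^{\ast}\tilde\varphi_{k}\|^{2}\,ds+\varepsilon-\langle\tilde\varphi_{k},h_{n}(z_{k})\rangle.$$
Since $\|\tilde\varphi_{k}\|=1$ in a Hilbert space, a subsequence satisfies $\tilde\varphi_{k}\rightharpoonup\tilde\varphi$ with $\|\tilde\varphi\|\le 1$. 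The compactness of $S(t)$ for $t>0$ gives compactness of $T_{q}(t)$, hence of $T_{q}^{\ast}(b-s)$, so $B^{\ast}T_{q}^{\ast}(b-s)\Pi^{\ast}\tilde\varphi_{k}\to B^{\ast}T_{q}^{\ast}(b-s)\Pi^{\ast}\tilde\varphi$ strongly in $U$ for each $s\in[0,b)$. The uniform bound $\|B^{\ast}T_{q}^{\ast}(b-s)\Pi^{\ast}\tilde\varphi_{k}\|\le M_{B}M_{S}\|\Pi\|/\Gamma(q)$ together with integrability of $(b-s)^{q-1}$ licenses dominated convergence, giving
$$\int_{0}^{b}(b-s)^{q-1}\|B^{\ast}T_{q}^{\ast}(b-s)\Pi^{\ast}\tilde\varphi_{k}\|^{2}\,ds\;\longrightarrow\; I(\tilde\varphi):=\int_{0}^{b}(b-s)^{q-1}\|B^{\ast}T_{q}^{\ast}(b-s)\Pi^{\ast}\tilde\varphi\|^{2}\,ds.$$

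For the cross term I would invoke that, by an argument entirely parallel to Lemma \ref{lem:rc}, the set $\{h_{n}(z):z\in B(0;r)\}$ is relatively compact in $X$; along a further subsequence $h_{n}(z_{k})\to h^{\ast}$ strongly, whence $\langle\tilde\varphi_{k},h_{n}(z_{k})\rangle\to\langle\tilde\varphi,h^{\ast}\rangle$. A two-case split now finishes: if $\tilde\varphi\neq 0$, the contrapositive of the approximate-controllability criterion contained in assumption (AC) forces $B^{\ast}T_{q}^{\ast}(b-\cdot)\Pi^{\ast}\tilde\varphi$ not to vanish a.e. on $[0,b]$, so $I(\tilde\varphi)>0$; combined with $\|\varphi_{k}\|\to\infty$ this makes the first term, and hence the ratio, tend to $+\infty$, which a fortiori exceeds $\varepsilon$. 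If instead $\tilde\varphi=0$, the limit of the cross term is $\langle 0,h^{\ast}\rangle=0$ and the quadratic part is nonnegative, so $\liminf_{k} J_{\varepsilon,n}(\varphi_{k};z_{k})/\|\varphi_{k}\|\ge\varepsilon$.

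The main obstacle is the cross term $\langle\tilde\varphi_{k},h_{n}(z_{k})\rangle$: weak convergence of $\tilde\varphi_{k}$ alone does not allow passage to the limit when $z_{k}$ varies freely over $B(0;r)$, so strong convergence of $h_{n}(z_{k})$ must be extracted. The remedy is precisely the range-compactness of $h_{n}$ on $B(0;r)$ supplied by the argument of Lemma \ref{lem:rc}; this is the structural reason Lemma \ref{lem:1} is placed after the compactness lemma, as the weak–strong pairing it enables is exactly what closes the estimate.
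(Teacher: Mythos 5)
Your proof is correct and follows essentially the same route as the paper's: normalize $\varphi$, extract a weak limit $\tilde\varphi$, use compactness of the semigroup to pass to the limit in the quadratic term and the relative compactness of $h_{n}(B(0;r))$ (inherited from Lemma \ref{lem:rc}) to close the weak--strong pairing in the cross term, and invoke (AC) to control $\tilde\varphi$. The only difference is organizational: you argue directly via a case split on $\tilde\varphi=0$ versus $\tilde\varphi\neq 0$, whereas the paper argues by contradiction and deduces $\tilde\varphi=0$ from the boundedness of the ratio together with Fatou's lemma; the underlying ingredients are identical.
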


\begin{proof}
In order to prove (\ref{f7}), suppose that it is not the case. Then there
exists sequences $\left\{  \varphi_{l}\right\}  \subset X,$ $\left\{
z_{l}\right\}  \subset B\left(  0;r\right)  $, with $\left\Vert \varphi
_{l}\right\Vert \rightarrow\infty$, such that%
\begin{equation}
\underline{\lim}_{l\rightarrow\infty}\frac{J_{\mathbb{\varepsilon},n}\left(
\varphi_{l};z_{l}\right)  }{\left\Vert \varphi_{l}\right\Vert }%
<\mathbb{\varepsilon}. \label{f2}%
\end{equation}
Without loss of generality, we may assume that%
\begin{equation}
h_{n}\left(  z_{l}\right)  \rightarrow h_{n},\ \ \text{strongly}%
\ \text{in\ \ }X\text{,} \label{f3}%
\end{equation}
for some $h_{n}\in X.$ In fact, $\left\{  h_{n}\left(  z_{l}\right)
:l\geq1\right\}  \subset\operatorname{Im}Q$ is relatively compact in $X$;
thus, we may assume this by picking a subsequence.

Next, we normalize $\varphi_{l}:$ $\widetilde{\varphi}_{l}=\dfrac{\varphi_{l}%
}{\left\Vert \varphi_{l}\right\Vert }$. Since $\left\Vert \widetilde{\varphi
}_{l}\right\Vert =1,$ we can extract a subsequence (still denoted by
$\widetilde{\varphi}_{l}$), which weakly converges in $X$ to an element
$\widetilde{\varphi}$ in $X$. Consequently, because $S\left(  t\right)  ,$
$t>0$, is a compact semigroup, we see that
\begin{equation}
B^{\ast}S^{\ast}\left(  b-\cdot\right)  \Pi^{\ast}\widetilde{\varphi}%
_{l}\rightarrow B^{\ast}S^{\ast}\left(  b-\cdot\right)  \Pi^{\ast}%
\widetilde{\varphi},\ \ \ \text{strongly\ \ in}\ C\left(  \left[  0,b\right]
,X\right)  ,\ \text{as\ }l\rightarrow\infty. \label{f11}%
\end{equation}
From (\ref{f1}), it follows that%
\[
\frac{J_{\mathbb{\varepsilon},n}\left(  \varphi_{l};z_{l}\right)  }{\left\Vert
\varphi_{l}\right\Vert }=\frac{\left\Vert \varphi_{l}\right\Vert }{2}\int
_{0}^{b}\left(  b-s\right)  ^{q-1}\left\Vert B^{\ast}T_{q}^{\ast}\left(
b-s\right)  \Pi^{\ast}\varphi\right\Vert ^{2}ds+\mathbb{\varepsilon}\left\Vert
\widetilde{\varphi}_{l}\right\Vert -\left\langle \widetilde{\varphi}_{l}%
,h_{n}\left(  z_{l}\right)  \right\rangle .
\]
Thus, noting that $\left\Vert \varphi_{l}\right\Vert \rightarrow\infty,$ by
(\ref{f2})-(\ref{f11}) and the Fatou lemma%
\[
\int_{0}^{b}\left(  b-s\right)  ^{q-1}\left\Vert B^{\ast}T_{q}^{\ast}\left(
b-s\right)  \Pi^{\ast}\widetilde{\varphi}\right\Vert ^{2}ds\leq\underline
{\lim}_{l\rightarrow\infty}\int_{0}^{b}\left(  b-s\right)  ^{q-1}\left\Vert
B^{\ast}T_{q}^{\ast}\left(  b-s\right)  \Pi^{\ast}\widetilde{\varphi}%
_{l}\right\Vert ^{2}ds=0.
\]
By assumption (AC) we have $\widetilde{\varphi}=0,$ and we deduce that
\[
\widetilde{\varphi}_{l}\rightharpoonup0\ \ \text{weakly}\ \text{in
}X\ \text{as }l\rightarrow\infty.
\]
Hence%
\[
\underline{\lim}_{l\rightarrow\infty}\frac{J_{\mathbb{\varepsilon},n}\left(
\varphi_{l};z_{l}\right)  }{\left\Vert \varphi_{l}\right\Vert }\geq
\underline{\lim}_{l\rightarrow\infty}\left(  \mathbb{\varepsilon}\left\Vert
\widetilde{\varphi}_{l}\right\Vert -\left\langle \widetilde{\varphi}_{l}%
,h_{n}\left(  z_{l}\right)  \right\rangle \right)  =\mathbb{\varepsilon},
\]
which contradicts (\ref{f2}) and proves the claim (\ref{f7}).
\end{proof}

For any $z\in C\left(  \left[  0,b\right]  ,X\right)  $, the functional
$J_{\mathbb{\varepsilon},n}\left(  \cdot,z\right)  $ admits a unique minimum
$\widehat{\varphi}_{\varepsilon,n}$ that defines a map $\Phi_{\varepsilon
,n}:C\left(  \left[  0,b\right]  ,X\right)  \rightarrow X$. $\Phi
_{\varepsilon,n}$ has the following properties.

\begin{lemma}
\label{lem:2}There exists $R_{\varepsilon}>0$ such that $\left\Vert
\Phi_{\varepsilon,n}\left(  z\right)  \right\Vert <R_{\varepsilon}$ for any
$z\in B\left(  0;r\right)  ,$ $n\geq1.$
\end{lemma}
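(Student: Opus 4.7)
The plan is to combine the minimizing property of $\widehat{\varphi}_{\varepsilon,n}=\Phi_{\varepsilon,n}(z)$ with the coercivity estimate of Lemma \ref{lem:1}. The starting point is that, since $\widehat{\varphi}_{\varepsilon,n}$ minimizes $J_{\varepsilon,n}(\cdot;z)$ and $J_{\varepsilon,n}(0;z)=0$, one automatically has
\[
J_{\varepsilon,n}(\widehat{\varphi}_{\varepsilon,n};z)\le 0\quad\text{for every }z\in B(0;r),\ n\ge 1.
\]

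I would then argue by contradiction: if no such $R_{\varepsilon}$ existed, there would be sequences $\{z_l\}\subset B(0;r)$ and $\{n_l\}\subset\mathbb{N}$ with $\varphi_l:=\Phi_{\varepsilon,n_l}(z_l)$ satisfying $\|\varphi_l\|\to\infty$. Dividing the inequality above by $\|\varphi_l\|$ yields $J_{\varepsilon,n_l}(\varphi_l;z_l)/\|\varphi_l\|\le 0$, which must clash with the coercivity lower bound $\liminf_l J_{\varepsilon,n_l}(\varphi_l;z_l)/\|\varphi_l\|\ge\varepsilon$ supplied by a version of Lemma \ref{lem:1} that is uniform not only in $z\in B(0;r)$ but also in $n\ge 1$.

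The hard part will be verifying that this uniformity in $n$ holds, since Lemma \ref{lem:1} as stated fixes $n$. The only place $n$ enters the argument is through the sequence $h_{n_l}(z_l)$, which needs a strongly convergent subsequence in $X$. I would establish this by showing that $\{h_n(z):z\in B(0;r),\ n\ge 1\}$ is relatively compact in $X$: the integral part of $h_n$ coincides with $Q_2(z)$, which is independent of $n$ and relatively compact by Lemma \ref{lem:rc}, while the remaining piece $\Pi S_q(b)(y_0-S(\frac{1}{n})g(z))$ is the image under the compact operator $\Pi S_q(b)$ (compact by (S) together with $b>0$) of the set $\{y_0-S(\frac{1}{n})g(z):z\in B(0;r),\ n\ge 1\}$, which is bounded thanks to (G)(a) and the uniform bound $M_S$ on the semigroup.

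With this uniform compactness in hand, the remainder of the argument of Lemma \ref{lem:1} carries over: normalize $\widetilde{\varphi}_l=\varphi_l/\|\varphi_l\|$, extract a weakly convergent subsequence $\widetilde{\varphi}_l\rightharpoonup\widetilde{\varphi}$, exploit compactness of $T_q(t)$ for $t>0$ to upgrade this to strong convergence of $B^{\ast}T_q^{\ast}(b-\cdot)\Pi^{\ast}\widetilde{\varphi}_l$ in $C([0,b],X)$, apply Fatou together with assumption (AC) to force $\widetilde{\varphi}=0$, and pair the weak convergence $\widetilde{\varphi}_l\rightharpoonup 0$ with the strong convergence (along a subsequence) of $h_{n_l}(z_l)$ to conclude $\liminf_l J_{\varepsilon,n_l}(\varphi_l;z_l)/\|\varphi_l\|\ge\varepsilon$, contradicting the earlier bound and yielding the claimed $R_{\varepsilon}$.
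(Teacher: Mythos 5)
Your proposal is correct and follows essentially the same route as the paper: both rest on the two facts that $J_{\varepsilon,n}(\Phi_{\varepsilon,n}(z);z)\le J_{\varepsilon,n}(0;z)=0$ and that the coercivity of Lemma \ref{lem:1} forces $J_{\varepsilon,n}(\varphi;z)>0$ once $\left\Vert \varphi\right\Vert$ exceeds some $R_{\varepsilon}$, so the minimizer must lie in the ball of radius $R_{\varepsilon}$. The only difference is that you explicitly verify that the coercivity bound can be taken uniform in $n$, via the relative compactness of $\left\{ h_{n}(z):z\in B(0;r),\ n\ge 1\right\}$, a point the paper's two-line proof leaves implicit even though the lemma asserts a single $R_{\varepsilon}$ valid for all $n\ge 1$; this extra check is a worthwhile refinement rather than a change of method.
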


\begin{proof}
Let $z\in B\left(  0;r\right)  .$ From Lemma \ref{lem:1}(b), we see that there
exists a constant $R_{\varepsilon}>0$, such that%
\begin{equation}
\inf_{z\in B\left(  0;r\right)  }\frac{J_{\mathbb{\varepsilon},n}\left(
\varphi;z\right)  }{\left\Vert \varphi\right\Vert }\geq\frac
{\mathbb{\varepsilon}}{2},\ \ \ \left\Vert \varphi\right\Vert \geq
R_{\varepsilon}. \label{f4}%
\end{equation}
On the other hand, by the definition of $\Phi_{\varepsilon}$,%
\begin{equation}
J_{\mathbb{\varepsilon},n}\left(  \Phi_{\varepsilon,n}\left(  z\right)
;z\right)  \leq J_{\mathbb{\varepsilon},n}\left(  0;z\right)  =0. \label{f5}%
\end{equation}
Hence, combining (\ref{f4}) and (\ref{f5}), we have%
\[
\left\Vert \Phi_{\varepsilon,n}\left(  z\right)  \right\Vert <R_{\varepsilon
},\ \ \text{for\ all}\ z\in B\left(  0;r\right)  .
\]

\end{proof}

\begin{lemma}
\label{lem:3}For any $z_{l},$ $z\in B\left(  0;r\right)  $ satisfying%
\[
z_{l}\longrightarrow z,\ \ \ \text{in\ \ }C\left(  \left[  0,b\right]
,X\right)  \text{,}%
\]
it holds that%
\[
\lim_{l\rightarrow\infty}\left\Vert \Phi_{\varepsilon,n}\left(  z_{l}\right)
-\Phi_{\varepsilon,n}\left(  z\right)  \right\Vert =0.
\]

\end{lemma}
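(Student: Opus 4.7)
My plan is to show the minimiser $\Phi_{\varepsilon,n}$ is continuous by passing to limits in the defining variational problem. Write $\varphi_l := \Phi_{\varepsilon,n}(z_l)$ and $\varphi := \Phi_{\varepsilon,n}(z)$, and abbreviate the quadratic part of the cost as $I(\psi) := \int_0^b (b-s)^{q-1}\|B^{\ast}T_q^{\ast}(b-s)\Pi^{\ast}\psi\|^2\,ds$. By Lemma~\ref{lem:2} the sequence $\{\varphi_l\}$ is bounded in $X$, so after passing to a subsequence $\varphi_l \rightharpoonup \varphi^{\ast}$ weakly for some $\varphi^{\ast}\in X$; I will show $\varphi^{\ast}=\varphi$ and that the convergence is in fact strong.

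The first step is to pass to the limit inside $I$. Since $S(t)$ is compact for $t>0$, so is $T_q(b-s)$ for each $s<b$, and hence the compact operator $B^{\ast}T_q^{\ast}(b-s)\Pi^{\ast}$ sends weakly convergent sequences to strongly convergent ones. Pointwise in $s\in[0,b)$ one obtains $B^{\ast}T_q^{\ast}(b-s)\Pi^{\ast}\varphi_l \to B^{\ast}T_q^{\ast}(b-s)\Pi^{\ast}\varphi^{\ast}$ in $U$. Because $\{\|\varphi_l\|\}$ is uniformly bounded, there is a constant $C$ with $\|B^{\ast}T_q^{\ast}(b-s)\Pi^{\ast}\varphi_l\|^2 \leq C$, and since $q>0$ the function $s\mapsto C(b-s)^{q-1}$ is integrable on $[0,b]$, furnishing a Lebesgue majorant; dominated convergence then gives $I(\varphi_l)\to I(\varphi^{\ast})$.

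The second step identifies the weak limit. Weak lower semicontinuity of the norm yields $\|\varphi^{\ast}\|\leq\liminf_l\|\varphi_l\|$, weak-strong duality together with Lemma~\ref{lem:4} gives $\langle\varphi_l,h_n(z_l)\rangle \to \langle\varphi^{\ast},h_n(z)\rangle$, and combined with Step~1 these imply $\liminf_l J_{\varepsilon,n}(\varphi_l;z_l)\geq J_{\varepsilon,n}(\varphi^{\ast};z)$. For any test element $\psi\in X$, the optimality of $\varphi_l$ for $J_{\varepsilon,n}(\cdot;z_l)$ gives $J_{\varepsilon,n}(\varphi_l;z_l)\leq J_{\varepsilon,n}(\psi;z_l)$, and the right-hand side converges to $J_{\varepsilon,n}(\psi;z)$. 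Chaining these inequalities produces $J_{\varepsilon,n}(\varphi^{\ast};z)\leq J_{\varepsilon,n}(\psi;z)$ for every $\psi\in X$, and by the uniqueness of the minimiser of $J_{\varepsilon,n}(\cdot;z)$ one concludes $\varphi^{\ast}=\varphi$.

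The third step upgrades weak convergence to norm convergence. Specialising $\psi=\varphi$ in the optimality inequality also gives $\limsup_l J_{\varepsilon,n}(\varphi_l;z_l)\leq J_{\varepsilon,n}(\varphi;z)$, so the full limit $J_{\varepsilon,n}(\varphi_l;z_l)\to J_{\varepsilon,n}(\varphi;z)$ holds. Subtracting the already-known limits of $\tfrac12 I(\varphi_l)$ and $\langle\varphi_l,h_n(z_l)\rangle$ isolates $\varepsilon\|\varphi_l\|\to\varepsilon\|\varphi\|$, that is $\|\varphi_l\|\to\|\varphi\|$. Weak convergence combined with convergence of norms in the Hilbert space $X$ (the Radon-Riesz property) yields $\varphi_l\to\varphi$ strongly, and because this limit is the same for every weakly convergent subsequence of the original $\{\Phi_{\varepsilon,n}(z_l)\}$, the whole sequence converges. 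The main obstacle is precisely this passage from weak to strong convergence: the penalty $\varepsilon\|\cdot\|$ is convex but not uniformly convex, so strong convergence cannot be read off directly from the minimality inequality, and the Radon-Riesz argument via convergence of the full functional $J_{\varepsilon,n}$ values is what closes the gap.
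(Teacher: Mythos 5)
Your proposal is correct and follows essentially the same route as the paper: extract a bounded, weakly convergent subsequence via Lemma \ref{lem:2}, sandwich the functional values using the optimality of both minimisers and the continuity of $h_n$ (Lemma \ref{lem:4}), identify the weak limit with $\Phi_{\varepsilon,n}(z)$ by uniqueness of the minimiser, deduce convergence of the norms, and conclude strong convergence by the Radon--Riesz property of the Hilbert space. Your write-up is in fact slightly more careful than the paper's, e.g.\ in justifying the convergence of the quadratic term by compactness plus dominated convergence and in making the final subsequence argument explicit.
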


\begin{proof}
By Lemma \ref{lem:2}, we have boundedness of $\widehat{\varphi}_{\varepsilon
,n,l}=\Phi_{\varepsilon,n}\left(  z_{l}\right)  $. Consequently, we may assume
that $\widehat{\varphi}_{\varepsilon,n,l}\overset{w}{\longrightarrow
}\widetilde{\varphi}_{\varepsilon,n}$. Thus, by the definition of
$J_{\mathbb{\varepsilon},n}$ and the optimality of both $\widehat{\varphi
}_{\varepsilon,n,l}=\Phi_{\varepsilon,n}\left(  z_{l}\right)  \ $and
$\widehat{\varphi}_{\varepsilon,n}=\Phi_{\varepsilon,n}\left(  z\right)  $,
one has%
\begin{align*}
J_{\mathbb{\varepsilon},n}\left(  \widehat{\varphi}_{\varepsilon,n};z\right)
&  \leq J_{\mathbb{\varepsilon},n}\left(  \widetilde{\varphi}_{\varepsilon
,n};z\right)  \leq\underline{\lim}_{l\rightarrow\infty}J_{\mathbb{\varepsilon
},n}\left(  \widehat{\varphi}_{\varepsilon,n,l};z_{l}\right) \\
&  \leq\overline{\lim_{l\rightarrow\infty}}J_{\mathbb{\varepsilon},n}\left(
\widehat{\varphi}_{\varepsilon,n,l};z_{l}\right)  \leq\lim_{l\rightarrow
\infty}J_{\mathbb{\varepsilon},n}\left(  \widehat{\varphi}_{\varepsilon
,n};z_{l}\right)  =J_{\mathbb{\varepsilon},n}\left(  \widehat{\varphi
}_{\varepsilon,n};z\right)  .
\end{align*}
Hence, the equalities hold in the above. That means that $\widetilde{\varphi
}_{\varepsilon,n}$ is also a minimum of $J_{\mathbb{\varepsilon},n}\left(
\cdot;z\right)  $. By the uniqueness of the minimum, it is necessary that
$\widetilde{\varphi}_{\varepsilon,n}=\widehat{\varphi}_{\varepsilon,n}$.
Therefore%
\begin{align*}
\lim_{l\rightarrow\infty}J_{\mathbb{\varepsilon},n}\left(  \widehat{\varphi
}_{\varepsilon,n};z_{l}\right)   &  =J_{\mathbb{\varepsilon},n}\left(
\widehat{\varphi}_{\varepsilon,n};z\right)  ,\\
\lim_{l\rightarrow\infty}\int_{0}^{b}\left(  b-s\right)  ^{q-1}\left\Vert
B^{\ast}T_{q}^{\ast}\left(  b-s\right)  \Pi^{\ast}\widehat{\varphi
}_{\varepsilon,n,l}\right\Vert ^{2}ds  &  =\int_{0}^{b}\left(  b-s\right)
^{q-1}\left\Vert B^{\ast}T_{q}^{\ast}\left(  b-s\right)  \Pi^{\ast}%
\widehat{\varphi}_{\varepsilon,n}\right\Vert ^{2}ds,\\
\lim_{l\rightarrow\infty}\left\langle \widehat{\varphi}_{\varepsilon
,n,l},h_{n}\left(  z_{l}\right)  \right\rangle  &  =\left\langle
\widehat{\varphi}_{\varepsilon,n},h_{n}\left(  z\right)  \right\rangle
,\ \ \ \left\Vert \widehat{\varphi}_{\varepsilon,n}\right\Vert \leq
\lim_{l\rightarrow\infty}\left\Vert \widehat{\varphi}_{\varepsilon
,n,l}\right\Vert .
\end{align*}
These relations imply that%
\begin{equation}
\lim_{l\rightarrow\infty}\left\Vert \widehat{\varphi}_{\varepsilon
,n,l}\right\Vert =\left\Vert \widehat{\varphi}_{\varepsilon,n}\right\Vert .
\label{f6}%
\end{equation}
Because $X$ is Hilbert space, from $\widehat{\varphi}_{\varepsilon
,n,l}\overset{w}{\longrightarrow}\widehat{\varphi}_{\varepsilon,n}$ and
(\ref{f6}), we obtain the strong convergence of $\widehat{\varphi
}_{\varepsilon,n,l}$ to $\widehat{\varphi}_{\varepsilon,n}$.
\end{proof}

For fixed $n\geq1$, set $\Theta_{\varepsilon,n}:C\left(  \left[  0,b\right]
,X\right)  \rightarrow C\left(  \left[  0,b\right]  ,X\right)  $ defined by%
\begin{equation}
\left(  \Theta_{\varepsilon,n}z\right)  \left(  t\right)  =S_{q}\left(
t\right)  \left(  y_{0}-S\left(  \frac{1}{n}\right)  g\left(  z\right)
\right)  +\int_{0}^{t}\left(  t-s\right)  ^{q-1}T_{q}\left(  t-s\right)
\left[  Bu_{\varepsilon,n}\left(  s,z\right)  +f\left(  s,z\left(  s\right)
\right)  \right]  ds, \label{fx1}%
\end{equation}
with
\begin{equation}
u_{\varepsilon,n}\left(  s,z\right)  =B^{\ast}T_{q}^{\ast}\left(  b-s\right)
\Pi^{\ast}\widehat{\varphi}_{\varepsilon,n}=B^{\ast}T_{q}^{\ast}\left(
b-s\right)  \Pi^{\ast}\Phi_{\varepsilon,n}\left(  z\right)  . \label{c1}%
\end{equation}
We will prove $\Theta_{\varepsilon,n}$ has a fixed point by using Schauder's
fixed point theorem.

\begin{theorem}
\label{thm:app}Assume that the hypotheses of Theorem \ref{thm:main} are
satisfied. Then for $n\geq1$, the approximate control operator $\Theta
_{\varepsilon,n}$ has at least one fixed point in $C\left(  \left[
0,b\right]  ,X\right)  $.
\end{theorem}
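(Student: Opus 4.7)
The plan is to apply Schauder's fixed point theorem to $\Theta_{\varepsilon,n}$ on a closed ball $B(0;r) \subset C([0,b],X)$ by verifying, in order, the self-map property, continuity, and precompactness of the image. The approximating factor $S(1/n)$ inside the nonlocal term will play a decisive role; it is exactly what makes the argument go through at $t=0$.

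First I would fix $r$. Using the operator bounds $\|S_q(t)\| \le M_S$ and $\|T_q(t)\| \le M_S/\Gamma(q)$, assumption (G1), assumption (F2), and the crucial estimate $\|\Phi_{\varepsilon,n}(z)\| \le R_\varepsilon$ from Lemma~\ref{lem:2}, a direct estimate of (\ref{fx1})--(\ref{c1}) produces a constant $r_0$, depending only on the problem data and $\varepsilon$, such that $\|\Theta_{\varepsilon,n}(z)\|_C \le r_0$ for every $z \in B(0;r)$; any choice $r \ge r_0$ then gives a self-map. Continuity on $B(0;r)$ is routine: if $z_l \to z$ in $C$, joint continuity of $f$ combined with the envelope (F2) yields $f(\cdot, z_l(\cdot)) \to f(\cdot, z(\cdot))$ uniformly via dominated convergence; Lemma~\ref{lem:3} gives strong convergence $\Phi_{\varepsilon,n}(z_l) \to \Phi_{\varepsilon,n}(z)$, hence uniform convergence of the controls $u_{\varepsilon,n}(\cdot,z_l)$ on $[0,b]$ through (\ref{c1}); continuity of $g$ from (G1) and boundedness of $S(1/n)$ handle the nonlocal term.

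The main work is precompactness of $\Theta_{\varepsilon,n}(B(0;r))$ in $C([0,b],X)$, which I would establish via the Arzel\`a--Ascoli theorem. For pointwise relative compactness when $t > 0$, I would adapt the proof of Lemma~\ref{lem:rc}: $S_q(t)$ is compact, and the control and forcing integrals are approximated by operators factoring through $S(\eta^q \delta)$ with errors tending to zero as $\eta,\delta \to 0$. At $t = 0$ the value reduces to $y_0 - S(1/n) g(z)$, and the set $\{S(1/n) g(z) : z \in B(0;r)\}$ is relatively compact because $S(1/n)$ is compact and $\|g(z)\| \le \Lambda_g$ — this is precisely the reason for inserting the approximating factor. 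For equicontinuity, standard fractional-semigroup estimates based on strong continuity of $T_q$ and operator-norm continuity of $S_q(t),T_q(t)$ for $t > 0$ (available under compactness of $S$) handle increments on $(0,b]$; right-continuity at $t = 0$ is the delicate point, which I would settle by noting that strong continuity of $S_q$ is automatically uniform on the relatively compact set $\{y_0 - S(1/n) g(z) : z \in B(0;r)\}$, while the integral terms vanish as $t \to 0$ by direct $(t-s)^{q-1}$ estimates.

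The principal obstacle is exactly the behaviour at $t = 0$, since $S_q(0) = I$ is not compact on an infinite-dimensional $X$; the approximating term $S(1/n)$ is inserted precisely to restore compactness in the initial-value direction, and without it the Arzel\`a--Ascoli step would collapse. Once the three verifications are in hand, Schauder's theorem delivers the desired fixed point.
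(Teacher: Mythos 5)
Your proposal follows essentially the same route as the paper: Schauder's fixed point theorem on a ball $B(0;r(\varepsilon))$, with the same three verifications (self-map via the bound $R_\varepsilon$ from Lemma~\ref{lem:2}, continuity via Lemma~\ref{lem:3} and joint continuity of $f$, and compactness via Arzel\`a--Ascoli with the $S(1/n)$ factor rescuing relative compactness at $t=0$). In fact you supply more detail than the paper, whose ``Step 3'' merely asserts compactness; your filled-in argument is consistent with the techniques the paper uses in Lemma~\ref{lem:rc} and in the proof of Theorem~\ref{thm:2}.
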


\begin{proof}
Step 1: For any $n\geq1$, $\Theta_{\varepsilon,n}$ is continuous on $C\left(
\left[  0,b\right]  ,X\right)  $.

Let $\left\{  z_{m}:m\geq1\right\}  $ be a sequence in $C\left(  \left[
0,b\right]  ,X\right)  $ with lim$_{m\rightarrow\infty}z_{m}=z$ in $C\left(
\left[  0,b\right]  ,X\right)  $. By the continuity of $f$ and $u_{\varepsilon
,n}$, we deduce that $\left(  f\left(  s,z_{m}\left(  s\right)  \right)
,u_{\varepsilon,n}\left(  s,z_{m}\right)  \right)  $ converges to $\left(
f\left(  s,z\left(  s\right)  \right)  ,u_{\varepsilon,n}\left(  s,z\right)
\right)  $ uniformly for $s\in\left[  0,b\right]  $, and we have%
\begin{gather*}
\left\Vert \left(  \Theta_{\varepsilon,n}z_{m}\right)  \left(  t\right)
-\left(  \Theta_{\varepsilon,n}z\right)  \left(  t\right)  \right\Vert \\
\leq M_{S}^{2}\left\Vert g\left(  z_{m}\right)  -g\left(  z\right)
\right\Vert +\frac{M_{S}b^{q}}{\Gamma\left(  q+1\right)  }\left\Vert f\left(
\cdot,z_{m}\left(  \cdot\right)  \right)  -f\left(  \cdot,z\left(
\cdot\right)  \right)  \right\Vert _{C}\\
+\frac{M_{S}b^{q}}{\Gamma\left(  q+1\right)  }\left\Vert u_{\varepsilon
,n}\left(  \cdot,z_{m}\right)  -u_{\varepsilon,n}\left(  \cdot,z\right)
\right\Vert _{C}\rightarrow0,\ \ \ \text{as\ \ \ }n\rightarrow\infty,
\end{gather*}
which implies that the mapping $\Theta_{\varepsilon,n}$ is continuous on
$C\left(  \left[  0,b\right]  ,X\right)  $.

Step 2: There is a positive number $r\left(  \varepsilon\right)  >0$ such that
$\Theta_{\varepsilon,n}$ maps $B\left(  0;r\left(  \varepsilon\right)
\right)  $ into itself.

We see that%
\begin{align*}
&  \left\Vert \left(  \Theta_{\varepsilon,n}z_{k}\right)  \left(
t_{k}\right)  \right\Vert \\
&  \leq M_{S}^{2}\Lambda_{g}+\frac{M_{S}b^{q}}{\Gamma\left(  q+1\right)
}\left(  \left\Vert n\right\Vert _{C}+M_{S}M_{B}R_{\varepsilon}\right)
:=r\left(  \varepsilon\right)  .
\end{align*}

Step 3: For any $n\geq1$, $\Theta_{\varepsilon,n}$ is compact.

At the end, applying the Schauder fixed point theorem we obtain that for each
$n\geq1$, $\Theta_{\varepsilon,n}$ has at least one fixed point in $B\left(
0;r\left(  \varepsilon\right)  \right)  $.
\end{proof}

Assume that $z_{\varepsilon,n}\in B\left(  0;r\left(  \varepsilon\right)
\right)  \subset C\left(  \left[  0,b\right]  ,X\right)  $ is a fixed point of
(\ref{fx1})%
\[
\Theta_{\varepsilon,n}z_{\varepsilon,n}=z_{\varepsilon,n}%
\]
and $\Phi_{\varepsilon,n}\left(  z_{\varepsilon,n}\right)  $ is minimizer of
$J_{\mathbb{\varepsilon},n}\left(  \varphi;z_{\varepsilon,n}\right)  $ and
\[
u_{\varepsilon,n}\left(  s,z_{\varepsilon,n}\right)  =B^{\ast}T_{q}^{\ast
}\left(  b-s\right)  \Pi^{\ast}\Phi_{\varepsilon,n}\left(  z_{\varepsilon
,n}\right)  ,
\]
is the corresponding control. Moreover, assume that
\[
z_{\varepsilon,n}\overset{n\rightarrow\infty}{\rightarrow}z_{\varepsilon
}\ \text{strongly}\ \text{in }C\left(  \left[  0,b\right]  ,X\right)
,\ \text{as\ }n\rightarrow\infty.
\]
$\Phi_{\varepsilon}\left(  z_{\varepsilon}\right)  $ is minimizer of
$J_{\mathbb{\varepsilon}}\left(  \varphi;z_{\varepsilon}\right)  $ and
\[
u_{\varepsilon}\left(  s,z_{\varepsilon}\right)  =B^{\ast}T_{q}^{\ast}\left(
b-s\right)  \Pi^{\ast}\Phi_{\varepsilon}\left(  z_{\varepsilon}\right)  ,
\]
is the corresponding control.

\begin{lemma}
\label{lem:5}Assume that%
\[
\lim_{n\rightarrow\infty}\left\Vert z_{\varepsilon,n}-z_{\varepsilon
}\right\Vert _{C}=0.
\]
Then
\begin{align*}
\Phi_{\varepsilon,n}\left(  z_{\varepsilon,n}\right)   &  \rightharpoonup
\Phi_{\varepsilon}\left(  z_{\varepsilon}\right)  \ \ \text{weakly in }X,\\
\lim_{n\rightarrow\infty}\left\Vert u_{\varepsilon,n}\left(  s,z_{\varepsilon
,n}\right)  -u_{\varepsilon}\left(  s,z_{\varepsilon}\right)  \right\Vert
_{C}  &  =0.
\end{align*}

\end{lemma}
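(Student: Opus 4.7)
The plan is to lift the given strong convergence $z_{\varepsilon,n}\to z_\varepsilon$ in $C([0,b],X)$ first to a weak limit of the dual minimizers $\Phi_{\varepsilon,n}(z_{\varepsilon,n})$ in $X$, and then to convert that weak limit into uniform strong convergence of the controls by exploiting the compactness of $T_q(t)$. The first step is to show $h_n(z_{\varepsilon,n})\to h(z_\varepsilon)$ strongly in $X$, where $h(z)$ is obtained from $h_n(z)$ by replacing $S(1/n)g(z)$ with $g(z)$. The only $n$-dependent piece is $\Pi S_q(b)S(1/n)g(z_{\varepsilon,n})$, which I would split as
$$\Pi S_q(b)S(1/n)\bigl[g(z_{\varepsilon,n})-g(z_\varepsilon)\bigr] + \Pi S_q(b)\bigl[S(1/n)-I\bigr]g(z_\varepsilon);$$
the first part tends to zero by continuity of $g$ and the uniform bound $\|S(1/n)\|\leq M_S$, and the second by strong continuity of $S$ at $0$. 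The integral term is handled by dominated convergence using (F1) and (F2).

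Given this, Lemma \ref{lem:2} bounds $\{\Phi_{\varepsilon,n}(z_{\varepsilon,n})\}$ uniformly in $X$, so I may extract a weakly convergent subsequence with limit $\widetilde\varphi$. Following the template of Lemma \ref{lem:3}, I would show that for every $\varphi\in X$,
$$J_\varepsilon(\widetilde\varphi; z_\varepsilon) \leq \liminf_n J_{\varepsilon,n}(\Phi_{\varepsilon,n}(z_{\varepsilon,n}); z_{\varepsilon,n}) \leq \liminf_n J_{\varepsilon,n}(\varphi; z_{\varepsilon,n}) = J_\varepsilon(\varphi; z_\varepsilon),$$
the first inequality combining weak lower semicontinuity of the quadratic and norm terms with the weak-strong pairing limit $\langle \Phi_{\varepsilon,n}(z_{\varepsilon,n}), h_n(z_{\varepsilon,n})\rangle \to \langle \widetilde\varphi, h(z_\varepsilon)\rangle$ furnished by the first step, and the equality coming from the same first step applied at a fixed test vector $\varphi$. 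Thus $\widetilde\varphi$ minimizes the strictly convex functional $J_\varepsilon(\cdot;z_\varepsilon)$, so by uniqueness $\widetilde\varphi = \Phi_\varepsilon(z_\varepsilon)$; since every weak cluster point coincides with $\Phi_\varepsilon(z_\varepsilon)$, the whole sequence satisfies $\Phi_{\varepsilon,n}(z_{\varepsilon,n})\rightharpoonup \Phi_\varepsilon(z_\varepsilon)$.

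For the controls, set $v_n:=\Phi_{\varepsilon,n}(z_{\varepsilon,n})-\Phi_\varepsilon(z_\varepsilon)$, so $v_n\rightharpoonup 0$ and $\|v_n\|$ is uniformly bounded, and the difference at time $s$ is $B^*T_q^*(b-s)\Pi^* v_n$. For each fixed $s\in[0,b)$, hypothesis (S) yields compactness of $T_q(b-s)$ and hence of $T_q^*(b-s)\Pi^*$, converting $v_n\rightharpoonup 0$ into $B^*T_q^*(b-s)\Pi^* v_n\to 0$ in $U$. Uniform convergence in $s$ then follows from the uniform bound on $\|v_n\|$ together with equicontinuity of $s\mapsto T_q^*(b-s)\Pi^* v_n$, the latter coming from operator-norm continuity of $T_q(\cdot)$ on $(0,b]$ (a consequence of its compactness for $t>0$) via an Arzel\`a--Ascoli-type argument. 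The main obstacle is exactly this last passage near $s=b$: because $T_q^*(0)=\Gamma(q)^{-1}I$ is not compact in an infinite dimensional setting, pointwise strong convergence at the endpoint is not automatic, so the uniform version requires cutting off a small strip $[b-\eta,b]$ whose contribution is controlled uniformly in $n$ by the operator bound $\|T_q(t)\|\leq M_S/\Gamma(q)$, and then applying the equicontinuity-plus-pointwise-convergence argument on the complementary compact set.
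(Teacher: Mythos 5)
Your argument for the first conclusion (weak convergence of the minimizers) is correct and follows essentially the same route as the paper: boundedness from Lemma \ref{lem:2}, extraction of a weak cluster point, the sandwich of $J$-values using optimality, weak lower semicontinuity, and the convergence $h_{n}\left(  z_{\varepsilon,n}\right)  \rightarrow h\left(  z_{\varepsilon}\right)  $, and finally uniqueness of the minimizer of the strictly convex functional. That part is fine.

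The gap is in your treatment of the second conclusion. Writing $v_{n}=\Phi_{\varepsilon,n}\left(  z_{\varepsilon,n}\right)  -\Phi_{\varepsilon}\left(  z_{\varepsilon}\right)  $, you only have $v_{n}\rightharpoonup0$ weakly, and you try to convert this into uniform convergence of $s\mapsto B^{\ast}T_{q}^{\ast}\left(  b-s\right)  \Pi^{\ast}v_{n}$ on all of $\left[  0,b\right]  $ via compactness of $T_{q}\left(  b-s\right)  $ for $s<b$. You correctly identify that this breaks down at $s=b$, where $T_{q}^{\ast}\left(  0\right)  $ is a multiple of the identity and hence not compact, but your proposed repair does not work: on the strip $\left[  b-\eta,b\right]  $ the bound $\left\Vert B^{\ast}T_{q}^{\ast}\left(  b-s\right)  \Pi^{\ast}v_{n}\right\Vert \leq M_{B}\frac{M_{S}}{\Gamma\left(  q\right)  }\left\Vert \Pi\right\Vert \left\Vert v_{n}\right\Vert $ is merely \emph{bounded} uniformly in $n$ (by a constant times $R_{\varepsilon}$), not small, since $\left\Vert v_{n}\right\Vert $ does not tend to zero under weak convergence alone. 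Shrinking $\eta$ does not shrink the sup of this quantity over the strip, so the cutoff argument leaves an error term that never vanishes, and indeed at $s=b$ itself the difference is $cB^{\ast}\Pi^{\ast}v_{n}$, which need not converge strongly when $B^{\ast}$ and $\Pi^{\ast}$ are not compact.

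The missing idea — and the way the paper proceeds — is to upgrade the weak convergence of $\Phi_{\varepsilon,n}\left(  z_{\varepsilon,n}\right)  $ to \emph{strong} convergence in $X$ before touching the controls. The chain of equalities you already established forces the quadratic term and the pairing term of $J_{\varepsilon,n}$ to converge individually, and hence the remaining term $\mathbb{\varepsilon}\left\Vert \Phi_{\varepsilon,n}\left(  z_{\varepsilon,n}\right)  \right\Vert $ must converge to $\mathbb{\varepsilon}\left\Vert \Phi_{\varepsilon}\left(  z_{\varepsilon}\right)  \right\Vert $; in a Hilbert space, weak convergence together with convergence of norms gives strong convergence. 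Once $\left\Vert v_{n}\right\Vert \rightarrow0$, the uniform convergence of the controls is immediate from the operator bound $\left\Vert T_{q}\left(  t\right)  \right\Vert \leq M_{S}/\Gamma\left(  q\right)  $, with no compactness of $T_{q}$ needed at all.
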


\begin{proof}
By definition of the minimizing functional $\Phi_{\varepsilon,n}\left(
z_{\varepsilon,n}\right)  $ and $\Phi_{\varepsilon}\left(  z_{\varepsilon
}\right)  $ are minimizers of
\begin{align*}
J_{\mathbb{\varepsilon},n}\left(  \varphi;z_{\varepsilon,n}\right)   &
=\frac{1}{2}\int_{0}^{b}\left(  b-s\right)  ^{q-1}\left\Vert B^{\ast}%
T_{q}^{\ast}\left(  b-s\right)  \Pi^{\ast}\varphi\right\Vert ^{2}%
ds+\mathbb{\varepsilon}\left\Vert \varphi\right\Vert -\left\langle
\varphi,h_{n}\left(  z_{\varepsilon,n}\right)  \right\rangle ,\\
J_{\mathbb{\varepsilon}}\left(  \varphi;z_{\varepsilon}\right)   &  =\frac
{1}{2}\int_{0}^{b}\left(  b-s\right)  ^{q-1}\left\Vert B^{\ast}T_{q}^{\ast
}\left(  b-s\right)  \Pi^{\ast}\varphi\right\Vert ^{2}ds+\mathbb{\varepsilon
}\left\Vert \varphi\right\Vert -\left\langle \varphi,h\left(  z_{\varepsilon
}\right)  \right\rangle ,\
\end{align*}
correspondingly. By Lemma \ref{lem:2}, we have boundedness of $\Phi
_{\varepsilon,n}\left(  z_{\varepsilon,n}\right)  $. Consequently, we may
assume that $\Phi_{\varepsilon,n}\left(  z_{\varepsilon,n}\right)
\rightharpoonup\widetilde{\Phi}_{\varepsilon}$ weakly in $X$. Thus, by the
definition of $J_{\mathbb{\varepsilon},n}$ and the optimality of both
$\Phi_{\varepsilon,n}\left(  z_{\varepsilon,n}\right)  $ and $\Phi
_{\varepsilon}\left(  z_{\varepsilon}\right)  $, one has%
\begin{equation}
J_{\mathbb{\varepsilon}}\left(  \Phi_{\varepsilon}\left(  z_{\varepsilon
}\right)  ;z_{\varepsilon}\right)  \leq J_{\mathbb{\varepsilon}}\left(
\widetilde{\Phi}_{\varepsilon};z_{\varepsilon}\right)  \leq\underline{\lim
}_{n\rightarrow\infty}J_{\mathbb{\varepsilon}}\left(  \Phi_{\varepsilon
,n}\left(  z_{\varepsilon,n}\right)  ;z_{\varepsilon}\right)  , \label{qq1}%
\end{equation}
and%
\begin{equation}
\underline{\lim}_{n\rightarrow\infty}J_{\mathbb{\varepsilon},n}\left(
\Phi_{\varepsilon,n}\left(  z_{\varepsilon,n}\right)  ;z_{\varepsilon
,n}\right)  \leq\overline{\lim_{n\rightarrow\infty}}J_{\mathbb{\varepsilon}%
,n}\left(  \Phi_{\varepsilon,n}\left(  z_{\varepsilon,n}\right)
;z_{\varepsilon,n}\right)  \leq\lim_{n\rightarrow\infty}J_{\mathbb{\varepsilon
},n}\left(  \Phi_{\varepsilon}\left(  z_{\varepsilon}\right)  ;z_{\varepsilon
,n}\right)  =J_{\mathbb{\varepsilon}}\left(  \Phi_{\varepsilon}\left(
z_{\varepsilon}\right)  ;z_{\varepsilon}\right)  . \label{qq2}%
\end{equation}
Noting that $\underline{\lim}_{n\rightarrow\infty}J_{\mathbb{\varepsilon}%
}\left(  \Phi_{\varepsilon,n}\left(  z_{\varepsilon,n}\right)  ;z_{\varepsilon
}\right)  =\underline{\lim}_{n\rightarrow\infty}J_{\mathbb{\varepsilon}%
,n}\left(  \Phi_{\varepsilon,n}\left(  z_{\varepsilon,n}\right)
;z_{\varepsilon,n}\right)  $ and combining (\ref{qq1}) and (\ref{qq2}) we have%
\begin{align*}
J_{\mathbb{\varepsilon}}\left(  \Phi_{\varepsilon}\left(  z_{\varepsilon
}\right)  ;z_{\varepsilon}\right)   &  =J_{\mathbb{\varepsilon}}\left(
\widetilde{\Phi}_{\varepsilon};z_{\varepsilon}\right)  ,\\
\underline{\lim}_{n\rightarrow\infty}J_{\mathbb{\varepsilon},n}\left(
\Phi_{\varepsilon,n}\left(  z_{\varepsilon,n}\right)  ;z_{\varepsilon
,n}\right)   &  =\overline{\lim_{n\rightarrow\infty}}J_{\mathbb{\varepsilon
},n}\left(  \Phi_{\varepsilon,n}\left(  z_{\varepsilon,n}\right)
;z_{\varepsilon,n}\right)  =J_{\mathbb{\varepsilon}}\left(  \Phi_{\varepsilon
}\left(  z_{\varepsilon}\right)  ;z_{\varepsilon}\right)  .
\end{align*}
That means that $\widetilde{\Phi}_{\varepsilon}$ is also a minimum of
$J_{\mathbb{\varepsilon}}\left(  \cdot;z_{\varepsilon}\right)  $. By the
uniqueness of the minimum, it is necessary that $\widetilde{\Phi}%
_{\varepsilon}=\Phi_{\varepsilon}\left(  z_{\varepsilon}\right)  $. Therefore
\begin{align*}
\lim_{n\rightarrow\infty}J_{\mathbb{\varepsilon},n}\left(  \Phi_{\varepsilon
,n}\left(  z_{\varepsilon,n}\right)  ;z_{\varepsilon,n}\right)   &
=J_{\mathbb{\varepsilon}}\left(  \Phi_{\varepsilon}\left(  z_{\varepsilon
}\right)  ;z_{\varepsilon}\right)  ,\\
\lim_{n\rightarrow\infty}\int_{0}^{b}\left(  b-s\right)  ^{q-1}\left\Vert
B^{\ast}T_{q}^{\ast}\left(  b-s\right)  \Pi^{\ast}\Phi_{\varepsilon,n}\left(
z_{\varepsilon,n}\right)  \right\Vert ^{2}ds  &  =\int_{0}^{b}\left(
b-s\right)  ^{q-1}\left\Vert B^{\ast}T_{q}^{\ast}\left(  b-s\right)  \Pi
^{\ast}\Phi_{\varepsilon}\left(  z_{\varepsilon}\right)  \right\Vert ^{2}ds,\\
\lim_{n\rightarrow\infty}\left\langle \Phi_{\varepsilon,n}\left(
z_{\varepsilon,n}\right)  ,h_{n}\left(  z_{\varepsilon,n}\right)
\right\rangle  &  =\left\langle \Phi_{\varepsilon}\left(  z_{\varepsilon
}\right)  ,h\left(  z_{\varepsilon}\right)  \right\rangle ,\ \ \ \left\Vert
\Phi_{\varepsilon}\left(  z_{\varepsilon}\right)  \right\Vert \leq
\lim_{n\rightarrow\infty}\left\Vert \Phi_{\varepsilon,n}\left(  z_{\varepsilon
,n}\right)  \right\Vert .
\end{align*}
These relations imply that%
\begin{equation}
\lim_{n\rightarrow\infty}\left\Vert \Phi_{\varepsilon,n}\left(  z_{\varepsilon
,n}\right)  \right\Vert =\left\Vert \Phi_{\varepsilon}\left(  z_{\varepsilon
}\right)  \right\Vert . \label{ff6}%
\end{equation}
As $X$ is Hilbert space, from $\Phi_{\varepsilon,n}\left(  z_{\varepsilon
,n}\right)  \rightharpoonup\Phi_{\varepsilon}\left(  z_{\varepsilon}\right)  $
weakly in $X$ and (\ref{ff6}), we obtain the strong convergence of
$\Phi_{\varepsilon,n}\left(  z_{\varepsilon,n}\right)  $ to $\Phi
_{\varepsilon}\left(  z_{\varepsilon}\right)  $.
\end{proof}

\section{Proof of the main result}

Let $y_{0}\in X,\ y_{b}\in E$ be any given two points and $\mathbb{\varepsilon
}>0$ be any given accuracy. Then it is seen that for all $t\in\left[
0,b\right]  $%
\begin{align*}
\left(  \Theta_{\varepsilon}z\right)  \left(  t\right)   &  =S_{q}\left(
t\right)  \left(  y_{0}-g\left(  z\right)  \right)  +\int_{0}^{t}\left(
t-s\right)  ^{q-1}T_{q}\left(  t-s\right)  \left[  Bu_{\varepsilon}\left(
s,z\right)  +f\left(  s,z\left(  s\right)  \right)  \right]  ds,\\
u\left(  s,z\right)   &  =B^{\ast}T_{q}^{\ast}\left(  b-s\right)  \Pi^{\ast
}\widehat{\varphi}_{\varepsilon}=B^{\ast}T_{q}^{\ast}\left(  b-s\right)
\Pi^{\ast}\Phi_{\varepsilon}\left(  z\right)  .
\end{align*}

\begin{theorem}
\label{thm:2}Assume that the hypotheses of Theorem \ref{thm:main} are
satisfied. Then the control operator $\Theta_{\varepsilon}$ has at least one
fixed point in $C\left(  \left[  0,b\right]  ,X\right)  $.
\end{theorem}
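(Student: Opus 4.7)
The strategy is to take the sequence of fixed points $\{z_{\varepsilon,n}\}_{n\ge1}$ of the regularized operators $\Theta_{\varepsilon,n}$ furnished by Theorem \ref{thm:app}, show that a subsequence converges strongly in $C([0,b],X)$ to some $z_\varepsilon$, and then pass to the limit in the identity $z_{\varepsilon,n}=\Theta_{\varepsilon,n}z_{\varepsilon,n}$ to obtain $z_\varepsilon=\Theta_\varepsilon z_\varepsilon$. Lemma \ref{lem:2} (through the bound $R_\varepsilon$), together with (F2) and (G1), provides $n$-independent uniform bounds on the functions $u_{\varepsilon,n}(\cdot,z_{\varepsilon,n})$, $f(\cdot,z_{\varepsilon,n}(\cdot))$ and $g(z_{\varepsilon,n})$, and Theorem \ref{thm:app} gives $\|z_{\varepsilon,n}\|_{C}\le r(\varepsilon)$, so the whole sequence lives in a fixed bounded subset of $C([0,b],X)$.

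The first technical step is to show that $\{z_{\varepsilon,n}\}$ is relatively compact in $C([\delta',b],X)$ for every $\delta'\in(0,b)$. Pointwise-in-$t$ relative compactness for $t\in(\delta',b]$ follows from the compactness of $S_q(t)$ and of $T_q(t-s)$ for $s<t$ (hypothesis (S)) combined with the same $(\eta,\delta)$-truncation trick used in the proof of Lemma \ref{lem:rc}; equicontinuity on $[\delta',b]$ is then routine from the strong continuity of $S_q,T_q$ and the uniform data bounds above. Arzel\`a--Ascoli plus a diagonal argument extracts a subsequence, not relabeled, that converges in $C([\delta,b],X)$ to some limit.

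Next I would transfer this convergence to all of $[0,b]$, which is the heart of the argument. Extend $z_{\varepsilon,n}|_{[\delta,b]}$ to $\tilde z_{\varepsilon,n}\in C([0,b],X)$ by the constant value $z_{\varepsilon,n}(\delta)$ on $[0,\delta]$; by (G2), $g(z_{\varepsilon,n})=g(\tilde z_{\varepsilon,n})$, and by (G1) together with the uniform convergence of $\tilde z_{\varepsilon,n}$ in $C([0,b],X)$, $g(z_{\varepsilon,n})\to g_*$ strongly in $X$. Combined with the strong convergence $S(1/n)\to I$ on $X$, this yields $S(1/n)g(z_{\varepsilon,n})\to g_*$ strongly, so the ``bad'' initial-value term $S_q(t)(y_0-S(1/n)g(z_{\varepsilon,n}))$ converges uniformly in $t\in[0,b]$. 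This is precisely what restores equicontinuity of $\{z_{\varepsilon,n}\}$ at $t=0$, and together with the compactness already established on $[\delta,b]$ it gives $z_{\varepsilon,n}\to z_\varepsilon$ in $C([0,b],X)$. Lemma \ref{lem:5} then applies and delivers $u_{\varepsilon,n}(\cdot,z_{\varepsilon,n})\to u_\varepsilon(\cdot,z_\varepsilon)$ in $C$. Dominated convergence on the two integral terms, joint continuity of $f$, and a final application of (G2) to identify $g_*=g(z_\varepsilon)$ allow us to pass to the limit in the fixed-point identity $z_{\varepsilon,n}=\Theta_{\varepsilon,n}z_{\varepsilon,n}$ and obtain $z_\varepsilon=\Theta_\varepsilon z_\varepsilon$.

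The principal obstacle is exactly the one highlighted in the introduction: $S_q(0)=I$ is not compact in infinite dimensions, so a direct Schauder argument for $\Theta_\varepsilon$ on $B(0;r(\varepsilon))$ fails. The regularization $S(1/n)g$ restores compactness at $t=0$ for the approximate problem, but removing it in the limit requires precompactness of $\{g(z_{\varepsilon,n})\}$ in $X$, which is not available a priori. Hypothesis (G2) is the structural ingredient that makes the limit work: it reduces the compactness question for $g(z_{\varepsilon,n})$ to compactness of the restriction $z_{\varepsilon,n}|_{[\delta,b]}$, and on that strip the compact semigroup $S_q(t)$ together with Arzel\`a--Ascoli does the job. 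Once this step is in place, everything else reduces to the already-proved Lemma \ref{lem:5} and routine dominated-convergence arguments.
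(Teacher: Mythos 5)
Your proposal is correct and follows essentially the same route as the paper: the paper also establishes relative compactness of the fixed-point set $\left\{ y_{\varepsilon,n}\right\}$ away from $t=0$ via the compactness of $S_{q},T_{q}$ and an Arzel\`a--Ascoli argument, uses the constant extension on $\left[0,\delta\right]$ together with (G2) and the strong convergence $S\left(1/n\right)\rightarrow I$ to handle the initial-value term and equicontinuity at $t=0$, and then passes to the limit in $\Theta_{\varepsilon,n}y_{\varepsilon,n}=y_{\varepsilon,n}$ using Lemma \ref{lem:5} and dominated convergence. Your write-up merely makes the logical ordering (compactness on $\left[\delta,b\right]$ first, then transfer to $t=0$) more explicit than the paper's four-step presentation.
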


\begin{proof}
Let $n\geq1$ be fixed. By Theorem \ref{fx1} the approximate operator
$\Theta_{\varepsilon,n}$ defined by (\ref{fx1}) has a fixed point, say
$y_{\varepsilon,n}$. We define the approximate solution set $D$ by%
\[
D=\left\{  y_{\varepsilon,n}\in C\left(  \left[  0,b\right]  ,X\right)
:\Theta_{\varepsilon,n}y_{\varepsilon,n}=y_{\varepsilon,n},\ n\geq1\right\}
.
\]

Step 1: $D\left(  0\right)  $ is relatively compact in $X$.

For $y_{\varepsilon,n}\in D$, $n\geq1$, define%
\[
\widetilde{y}_{\varepsilon,n}\left(  t\right)  =\left\{
\begin{tabular}
[c]{lll}%
$y_{\varepsilon,n}\left(  t\right)  ,$ &  & $\delta\leq t\leq b,$\\
$y_{\varepsilon,n}\left(  \delta\right)  $ &  & $0\leq t\leq\delta,$%
\end{tabular}
\ \right.
\]
where $\delta$ comes from the condition (G). It is easily seen that $\left\{
\widetilde{y}_{\varepsilon,n}:n\geq1\right\}  $ is relatively compact in
$C\left(  \left[  0,b\right]  ,X\right)  $. Without loss of generality, we may
suppose that $\widetilde{y}_{\varepsilon,n}\rightarrow\widetilde
{y}_{\varepsilon}\in C\left(  \left[  0,b\right]  ,X\right)  $ as
$n\rightarrow\infty$. By assumption (G), we get that $g\left(  y_{\varepsilon
,n}\right)  =g\left(  \widetilde{y}_{\varepsilon,n}\right)  \rightarrow
g\left(  \widetilde{y}_{\varepsilon}\right)  $. Thus by the continuity of
$S_{q}\left(  t\right)  $ and $g$, we have%
\begin{gather*}
\left\Vert y_{\varepsilon,n}\left(  0\right)  -\left(  y_{0}-g\left(
\widetilde{y}_{\varepsilon}\right)  \right)  \right\Vert =\left\Vert S\left(
\frac{1}{n}\right)  g\left(  y_{\varepsilon,n}\right)  -g\left(  \widetilde
{y}_{\varepsilon}\right)  \right\Vert \\
\leq\left\Vert S\left(  \frac{1}{n}\right)  g\left(  y_{\varepsilon,n}\right)
-S\left(  \frac{1}{n}\right)  g\left(  \widetilde{y}_{\varepsilon}\right)
\right\Vert +\left\Vert S\left(  \frac{1}{n}\right)  g\left(  \widetilde
{y}_{\varepsilon}\right)  -g\left(  \widetilde{y}_{\varepsilon}\right)
\right\Vert \\
\leq M_{S}\left\Vert g\left(  y_{\varepsilon,n}\right)  -g\left(
\widetilde{y}_{\varepsilon}\right)  \right\Vert +\left\Vert S\left(  \frac
{1}{n}\right)  g\left(  \widetilde{y}_{\varepsilon}\right)  -g\left(
\widetilde{y}_{\varepsilon}\right)  \right\Vert \rightarrow0,
\end{gather*}
as $n\rightarrow\infty$. So, $D\left(  0\right)  =\left\{  y_{\varepsilon
,n}\left(  0\right)  =y_{0}-S\left(  \frac{1}{n}\right)  g\left(
y_{\varepsilon,n}\right)  \right\}  $ is relatively compact in $X$.

Step 2: For each $t\in\left(  0,b\right]  $ the set $D\left(  t\right)
:=\left\{  y_{\varepsilon,n}\left(  t\right)  :n\geq1\right\}  $ is relatively
compact in $X$.

Step 3: $D$ is equicontinuous at $t=0.$

First of all note that%
\begin{align*}
&  \sup\left\{  \left\Vert Bu_{\varepsilon,n}\left(  s,y_{\varepsilon
,n}\right)  +f\left(  s,y_{\varepsilon,n}\left(  s\right)  \right)
\right\Vert :s\in\left[  0,b\right]  ,y_{\varepsilon,n}\in D\right\} \\
&  \leq M_{B}^{2}M_{S}R_{\varepsilon}+\left\Vert n\right\Vert _{C}.
\end{align*}
For $0<t<b$, we have%
\begin{gather*}
\left\Vert y_{\varepsilon,n}\left(  t\right)  -y_{\varepsilon,n}\left(
0\right)  \right\Vert =\left\Vert S_{q}\left(  t\right)  S\left(  \frac{1}%
{n}\right)  g\left(  y_{\varepsilon,n}\right)  -S\left(  \frac{1}{n}\right)
g\left(  y_{\varepsilon,n}\right)  \right\Vert \\
+\frac{M_{S}}{\Gamma\left(  q\right)  }\int_{0}^{t}\left(  t-s\right)
^{q-1}ds\sup\left\{  \left\Vert Bu_{\varepsilon,n}\left(  s,y_{\varepsilon
,n}\right)  +f\left(  s,y_{\varepsilon,n}\left(  s\right)  \right)
\right\Vert :s\in\left[  0,b\right]  ,y_{\varepsilon,n}\in D\right\} \\
\leq\left\Vert \left(  S_{q}\left(  t\right)  -I\right)  S\left(  \frac{1}%
{n}\right)  g\left(  y_{\varepsilon,n}\right)  \right\Vert \\
+\frac{M_{S}t^{q}}{\Gamma\left(  q+1\right)  }\left(  M_{B}^{2}M_{S}%
R_{\varepsilon}+\left\Vert n\right\Vert _{C}\right)
\end{gather*}
approaches zero uniformly as $t\rightarrow0$, since $\left\{  S\left(
\frac{1}{n}\right)  g\left(  y_{\varepsilon,n}\right)  :n\geq1\right\}  $ is
relatively compact in $X$. Then we obtain the set $D$ is equicontinuous at
$t=0$.

Step 4: $D$ is equicontinuous on $\left(  0,b\right]  $.

Let $0<t_{1}<t_{2}\leq b$ and choose $\eta>0$ such that $t_{1}-\eta>0$. Then,
for any $y_{\varepsilon,n}\in D$, we have%
\begin{align*}
&  \left\Vert y_{\varepsilon,n}\left(  t_{2}\right)  -y_{\varepsilon,n}\left(
t_{1}\right)  \right\Vert \\
&  \leq\left\Vert \left(  S_{q}\left(  t_{2}\right)  -S_{q}\left(
t_{1}\right)  \right)  \left(  y_{0}-S\left(  \frac{1}{n}\right)  g\left(
y_{\varepsilon,n}\right)  \right)  \right\Vert \\
&  +\left\Vert \int_{t_{1}}^{t_{2}}\left(  t_{2}-s\right)  ^{q-1}T_{q}\left(
t_{2}-s\right)  \left(  Bu_{\varepsilon,n}\left(  s,y_{\varepsilon,n}\right)
+f\left(  s,y_{\varepsilon,n}\left(  s\right)  \right)  \right)  ds\right\Vert
\\
&  +\left\Vert \int_{0}^{t_{1}}\left(  \left(  t_{2}-s\right)  ^{q-1}-\left(
t_{1}-s\right)  ^{q-1}\right)  T_{q}\left(  t_{2}-s\right)  \left(
Bu_{\varepsilon,n}\left(  s,y_{\varepsilon,n}\right)  +f\left(
s,y_{\varepsilon,n}\left(  s\right)  \right)  \right)  ds\right\Vert \\
&  +\left\Vert \int_{0}^{t_{1}}\left(  t_{1}-s\right)  ^{q-1}\left(
T_{q}\left(  t_{2}-s\right)  -T_{q}\left(  t_{1}-s\right)  \right)  \left(
Bu_{\varepsilon,n}\left(  s,y_{\varepsilon,n}\right)  +f\left(
s,y_{\varepsilon,n}\left(  s\right)  \right)  \right)  ds\right\Vert \\
&  =:I_{1}+I_{2}+I_{3}+I_{4}.
\end{align*}

From assumption (G) and norm continuity of $S_{q}\left(  t\right)  $, $t>0$,
it follows that%
\begin{equation}
I_{1}=\left\Vert \left(  S_{q}\left(  t_{2}\right)  -S_{q}\left(
t_{1}\right)  \right)  \left(  y_{0}-S\left(  \frac{1}{n}\right)  g\left(
y_{\varepsilon,n}\right)  \right)  \right\Vert \rightarrow0, \label{q1}%
\end{equation}
as $t_{2}-t_{1}\rightarrow0,$ uniformly for all $y_{\varepsilon,n}\in D$. By
direct calculation to $I_{2}$, we obtain that%
\begin{equation}
I_{2}\leq\frac{M_{S}\left(  t_{2}-t_{1}\right)  ^{q}}{\Gamma\left(
q+1\right)  }\left(  M_{B}^{2}M_{S}R_{\varepsilon}+\left\Vert n\right\Vert
_{C}\right)  \label{q2}%
\end{equation}

As $t_{1}>0,$we obtain%
\begin{equation}
I_{3}\leq\frac{M_{S}}{\Gamma\left(  q\right)  }\int_{0}^{t_{1}}\left(  \left(
t_{2}-s\right)  ^{q-1}-\left(  t_{1}-s\right)  ^{q-1}\right)  ds\left(
M_{B}^{2}M_{S}R_{\varepsilon}+\left\Vert n\right\Vert _{C}\right)  \label{q3}%
\end{equation}

Next, for $t_{1}-\eta>0$, we have%
\begin{align}
I_{4}  &  \leq\left\Vert \left(  \int_{0}^{t_{1}-\eta}+\int_{t_{1}-\eta
}^{t_{1}}\right)  \left(  t_{1}-s\right)  ^{q-1}\left(  T_{q}\left(
t_{2}-s\right)  -T_{q}\left(  t_{1}-s\right)  \right)  \left(  Bu_{\varepsilon
,n}\left(  s,y_{\varepsilon,n}\right)  +f\left(  s,y_{\varepsilon,n}\left(
s\right)  \right)  \right)  ds\right\Vert \nonumber\\
&  \leq\frac{t_{1}^{q}-\eta^{q}}{q}\sup\left\{  \left\Vert Bu_{\varepsilon
,n}\left(  s,y_{\varepsilon,n}\right)  +f\left(  s,y_{\varepsilon,n}\left(
s\right)  \right)  \right\Vert :s\in\left[  0,b\right]  ,y_{\varepsilon,n}\in
D\right\}  \sup_{0\leq s\leq t-\eta}\left\Vert T_{q}\left(  t_{2}-s\right)
-T_{q}\left(  t_{1}-s\right)  \right\Vert \nonumber\\
&  +\frac{2M_{S}\eta^{q}}{\Gamma\left(  q+1\right)  }\left(  M_{B}^{2}%
M_{S}R_{\varepsilon}+\left\Vert n\right\Vert _{C}\right)  \label{q4}%
\end{align}

Thus, combining the above inequalities (\ref{q1})--(\ref{q4}) with the
continuity of $T_{q}\left(  t\right)  ,t>0$, in the uniform operator topology,
we obtain the equicontinuity of the set $D$ on $\left(  0,b\right]  $.

Therefore, the set $D$ is relatively compact in $C\left(  \left[  0,b\right]
,X\right)  $ and we may assume $y_{\varepsilon,n}\rightarrow y_{\varepsilon}$
for some $y_{\varepsilon}\in C\left(  \left[  0,b\right]  ,X\right)  $ as
$n\rightarrow\infty$. On the other hand, by Lemma \ref{lem:5} $u_{\varepsilon
,n}\left(  s,y_{\varepsilon,n}\right)  \rightarrow u_{\varepsilon}\left(
s,y_{\varepsilon}\right)  $ in $C\left(  \left[  0,b\right]  ,X\right)  $ as
$n\rightarrow\infty.$ By taking the limit as $n\rightarrow\infty$ in
$\Theta_{\varepsilon,n}y_{\varepsilon,n}=y_{\varepsilon,n}$ and using the
Lebesgue dominated convergence theorem, we obtain that
\[
y_{\varepsilon}\left(  t\right)  =S_{q}\left(  t\right)  \left(
y_{0}-g\left(  y_{\varepsilon}\right)  \right)  +\int_{0}^{t}\left(
t-s\right)  ^{q-1}T_{q}\left(  t-s\right)  \left[  Bu_{\varepsilon}\left(
s,y_{\varepsilon}\right)  +f\left(  s,y_{\varepsilon}\left(  s\right)
\right)  \right]  ds,
\]
for $t\in\left[  0,b\right]  $, which implies that $y_{\varepsilon}$ is a mild
solution of semilinear fractional control system (\ref{cp1}). This completes
the proof.
\end{proof}

In view of Theorem \ref{thm:2} for any $\mathbb{\varepsilon}>0$ there exists
$y_{\mathbb{\varepsilon}}\in C\left(  \left[  0,b\right]  ,X\right)  $ such
that%
\[
y_{\mathbb{\varepsilon}}\left(  t\right)  =S_{q}\left(  t\right)  \left(
y_{0}-g\left(  y_{\mathbb{\varepsilon}}\right)  \right)  +\int_{0}^{t}\left(
t-s\right)  ^{q-1}T_{q}\left(  t-s\right)  \left[  Bu_{\varepsilon}\left(
s,y_{\varepsilon}\right)  +f\left(  s,y_{\varepsilon}\left(  s\right)
\right)  \right]  ds,
\]
where $u\left(  s,y_{\mathbb{\varepsilon}}\right)  =B^{\ast}S^{\ast}\left(
b-s\right)  \Phi_{\varepsilon}\left(  y_{\mathbb{\varepsilon}}\right)  .$

Now we prove our main result.

\begin{proof}
[\textbf{Proof of Theorem \ref{thm:main}}]By Lemma \ref{lem:1}(a) we know that
$J_{\mathbb{\varepsilon}}$ is strictly convex. Then $J_{\mathbb{\varepsilon}%
}\left(  \varphi;y_{\mathbb{\varepsilon}}\right)  $ has a unique critical
point which is its minimizer:%
\[
\widehat{\varphi}_{\mathbb{\varepsilon}}\in X:J_{\mathbb{\varepsilon}}\left(
\widehat{\varphi}_{\mathbb{\varepsilon}};y_{\mathbb{\varepsilon}}\right)
=\min_{\varphi\in X}J_{\mathbb{\varepsilon}}\left(  \varphi
;y_{\mathbb{\varepsilon}}\right)  .
\]
Given any $\psi\in X$ and $\lambda\in R$ we have%
\[
J_{\mathbb{\varepsilon}}\left(  \widehat{\varphi}_{\mathbb{\varepsilon}%
};y_{\mathbb{\varepsilon}}\right)  \leq J_{\mathbb{\varepsilon}}\left(
\widehat{\varphi}_{\mathbb{\varepsilon}}+\lambda\psi;y_{\mathbb{\varepsilon}%
}\right)
\]
or, in other words,%
\begin{align*}
&  \mathbb{\varepsilon}\left\Vert \widehat{\varphi}_{\mathbb{\varepsilon}%
}\right\Vert \\
&  \leq\frac{\lambda^{2}}{2}\int_{0}^{b}\left(  b-s\right)  ^{q-1}\left\Vert
B^{\ast}T_{q}^{\ast}\left(  b-s\right)  \Pi^{\ast}\psi\right\Vert
^{2}ds+\lambda\int_{0}^{b}\left(  b-s\right)  ^{q-1}\left\langle B^{\ast}%
T_{q}^{\ast}\left(  b-s\right)  \Pi^{\ast}\widehat{\varphi}%
_{\mathbb{\varepsilon}},B^{\ast}T_{q}^{\ast}\left(  b-s\right)  \Pi^{\ast}%
\psi\right\rangle ds\\
&  +\mathbb{\varepsilon}\left\Vert \widehat{\varphi}_{\mathbb{\varepsilon}%
}+\lambda\psi\right\Vert -\lambda\left\langle \psi,h\left(
y_{\mathbb{\varepsilon}}\right)  \right\rangle .
\end{align*}
Dividing this inequality by $\lambda>0$ and letting $\lambda\longrightarrow
0^{+}$ we obtain that%
\begin{align*}
\left\langle \psi,h\left(  y_{\mathbb{\varepsilon}}\right)  \right\rangle  &
\leq\int_{0}^{b}\left(  b-s\right)  ^{q-1}\left\langle B^{\ast}T_{q}^{\ast
}\left(  b-s\right)  \Pi^{\ast}\widehat{\varphi}_{\mathbb{\varepsilon}%
},B^{\ast}T_{q}^{\ast}\left(  b-s\right)  \Pi^{\ast}\psi\right\rangle ds\\
&  +\mathbb{\varepsilon}\lim\inf_{\lambda\longrightarrow0^{+}}\frac{\left\Vert
\widehat{\varphi}_{\mathbb{\varepsilon}}+\lambda\psi\right\Vert -\left\Vert
\widehat{\varphi}_{\mathbb{\varepsilon}}\right\Vert }{\lambda}\\
&  \leq\int_{0}^{b}\left(  b-s\right)  ^{q-1}\left\langle B^{\ast}T_{q}^{\ast
}\left(  b-s\right)  \Pi^{\ast}\widehat{\varphi}_{\mathbb{\varepsilon}%
},B^{\ast}T_{q}^{\ast}\left(  b-s\right)  \Pi^{\ast}\psi\right\rangle
ds+\mathbb{\varepsilon}\left\Vert \psi\right\Vert .
\end{align*}
Repeating this argument with $\lambda<0$ we obtain finally that
\begin{equation}
\left\vert \int_{0}^{b}\left(  b-s\right)  ^{q-1}\left\langle B^{\ast}%
T_{q}^{\ast}\left(  b-s\right)  \Pi^{\ast}\widehat{\varphi}%
_{\mathbb{\varepsilon}},B^{\ast}T_{q}^{\ast}\left(  b-s\right)  \Pi^{\ast}%
\psi\right\rangle ds-\left\langle \psi,h\left(  y_{\mathbb{\varepsilon}%
}\right)  \right\rangle \right\vert \leq\mathbb{\varepsilon}\left\Vert
\psi\right\Vert . \label{f9}%
\end{equation}
On the other hand, with $u_{\varepsilon}=B^{\ast}T_{q}^{\ast}\left(
b-s\right)  \Pi^{\ast}\widehat{\varphi}_{\mathbb{\varepsilon}}$ we have%
\begin{align}
&  \int_{0}^{b}\left(  b-s\right)  ^{q-1}\left\langle \Pi T_{q}\left(
b-s\right)  BB^{\ast}T_{q}^{\ast}\left(  b-s\right)  \Pi^{\ast}\widehat
{\varphi}_{\mathbb{\varepsilon}}ds-h\left(  y_{\mathbb{\varepsilon}}\right)
,\psi\right\rangle \nonumber\\
&  =\left\langle \Pi y_{\mathbb{\varepsilon}}\left(  b\right)  -y_{b}%
,\psi\right\rangle ,\nonumber\\
h\left(  y_{\mathbb{\varepsilon}}\right)   &  =y_{b}-\Pi S_{q}\left(
b\right)  \left(  y_{0}-g\left(  y_{\mathbb{\varepsilon}}\right)  \right)
-\int_{0}^{b}\left(  b-s\right)  ^{q-1}\Pi T_{q}\left(  b-s\right)  f\left(
s,y_{\mathbb{\varepsilon}}\left(  s\right)  \right)  ds. \label{f8}%
\end{align}
Then, combining (\ref{f9}) and (\ref{f8}) we obtain that%
\[
\left\vert \left\langle \Pi y_{\mathbb{\varepsilon}}\left(  b\right)
-y_{b},\psi\right\rangle \right\vert \leq\mathbb{\varepsilon}\left\Vert
\psi\right\Vert
\]
holds for any $\psi\in X$. Thus%
\[
\left\Vert \Pi y_{\mathbb{\varepsilon}}\left(  b\right)  -y_{b}\right\Vert
\leq\mathbb{\varepsilon}.
\]

\end{proof}

\section{Applications}

\textbf{Example 1: }Consider the following initial-boundary value problem of
fractional parabolic control system with Caputo fractional derivatives, based
on example 2 in \cite{dm1}:%
\begin{equation}
\left\{
\begin{tabular}
[c]{lll}%
$^{C}D_{0,t}^{2/3}x\left(  t,\theta\right)  =\dfrac{\partial^{2}}%
{\partial\theta^{2}}x\left(  t,\theta\right)  +Bu\left(  t,\theta\right)
+f\left(  t,x\left(  t,\theta\right)  \right)  ,$ & $t\in\left[  0,b\right]
,$ & $\theta\in\left[  0,\pi\right]  ,$\\
$x\left(  t,0\right)  =x\left(  t,\pi\right)  =0,$ & $t\in\left[  0,b\right]
,$ & \\
$x\left(  0,\theta\right)  =x_{0}\left(  \theta\right)  -\sum_{i=0}^{m}%
\int_{0}^{\pi}k\left(  \theta,s\right)  x\left(  t_{i},s\right)  ds,$ &
$t\in\left[  0,b\right]  ,$ & $\theta\in\left[  0,\pi\right]  .$%
\end{tabular}
\ \ \ \ \ \ \ \ \ \ \ \right.  \label{ex1}%
\end{equation}
Here, $f$ is a given function, $m$ is a positive integer, $0<t_{0}%
<t_{1}<...<t_{m}<b,$ $k\left(  \cdot,\cdot\right)  \in L^{2}\left(  \left[
0,\pi\right]  \times\left[  0,\pi\right]  ,R^{+}\right)  .\ $

Take $X=L^{2}\left[  0,\pi\right]  $ and the operator $A:D\left(  A\right)
\subset X\rightarrow X$ is defined by%
\[
Ay=y^{\prime\prime},
\]
where the domain $D\left(  A\right)  $ is defined by%
\[
\left\{  x\in X:x,\ x^{\prime}\ \text{are absolutely continuous,\ \ }%
x^{\prime\prime}\in X,\ \ x\left(  0\right)  =x\left(  \pi\right)  =0\right\}
.
\]
Then, $A$ can be written as%
\[
Ay=-\sum_{n=1}^{\infty}n^{2}\left\langle y,e_{n}\right\rangle e_{n},\ \ \ y\in
D\left(  A\right)  ,
\]
\bigskip where $e_{n}\left(  x\right)  =\sqrt{\frac{2}{\pi}}\sin nx$ is an
orthonormal basis of $X$. It is well known that $A$ is the infinitesimal
generator of a compact, analytic and self-adjoint semigroup $S\left(
t\right)  ,$ $t>0$, in $X$ given by%
\[
S\left(  t\right)  y=\sum_{n=1}^{\infty}\exp\left(  -n^{2}t\right)
\left\langle y,e_{n}\right\rangle e_{n},\ \ \ y\in X.
\]
Now, define an infinite-dimensional space $U$ by%
\[
U=\left\{  u=\sum_{n=2}^{\infty}u_{n}e_{n}\left(  \theta\right)  :\sum
_{n=2}^{\infty}u_{n}^{2}<\infty\right\}  ,
\]
with the norm $\left\Vert u\right\Vert ^{2}=\sum_{n=2}^{\infty}u_{n}^{2}$.
Define the operator $B:U\rightarrow X$ as follows%
\[
Bu=2u_{2}e_{1}\left(  \theta\right)  +\sum_{n=2}^{\infty}u_{n}e_{n}\left(
\theta\right)  .
\]

The system (\ref{ex1}) can be reformulated as the following nonlocal
controllability problem in $X.$%
\begin{align}
^{C}D_{0,t}^{2/3}y\left(  t\right)   &  =Ay\left(  t\right)  +Bu\left(
t\right)  +f\left(  t,y\left(  t\right)  \right)  ,\nonumber\\
y\left(  0\right)   &  =y_{0}-g\left(  y\right)  ,
\end{align}
where $y\left(  t\right)  =x\left(  t,\cdot\right)  ,\ f:\left[  0,b\right]
\times X\rightarrow X$ is given by $f\left(  t,y\left(  t\right)  \right)
=f\left(  t,x\left(  t,\cdot\right)  \right)  $ and the function $g:C\left(
\left[  0,b\right]  ,X\right)  \rightarrow X$ is given by%
\[
g\left(  y\right)  =\sum_{i=0}^{m}\int_{0}^{\pi}k\left(  \cdot,s\right)
x\left(  t_{i},s\right)  ds
\]

This implies that the condition (AC) is satisfied. If we assume that $f$ and
$g$ satisfy the assumptions (F) and (G), then by Theorem \ref{thm:main}, we
obtain that the system (\ref{ex1}) is approximately controllable on $\left[
0,T\right]  .$

\textbf{Example 2: }Consider the following affine hereditary differential
system in the Hilbert space $E$
\begin{align}
y^{\prime}\left(  t\right)   &  =Ay\left(  t\right)  +Ny\left(  t-h\right)
+\int_{-h}^{0}M\left(  \theta\right)  y\left(  t+\theta\right)  d\theta
+Bu\left(  t\right)  +f\left(  t,y\left(  t\right)  \right)  ,\label{ss1}\\
y\left(  0\right)   &  =\xi+\int_{\delta}^{b}h\left(  s,y\left(  s\right)
\right)  ds,\ \ y\left(  \theta\right)  =\eta\left(  \theta\right)
,\ \ \ -h\leq\theta<0,\ h>0,\ \delta>0,\nonumber
\end{align}
where $A$ is the generator of the strongly continuous compact semigroup
$S\left(  t\right)  :E\rightarrow E,\ t>0,$ $N\in\mathcal{L}\left(  E\right)
$, $M$ is a Lebesgue measurable and essentially bounded $\mathcal{L}\left(
E\right)  $-valued function on $\left[  -h,0\right]  $, $B\in\mathcal{L}%
\left(  U,E\right)  $, $\xi\in E$, $\eta\in L^{2}\left(  \left[  -h,0\right]
,E\right)  $. Introduce $M^{2}\left(  \left[  -h,0\right]  ,E\right)
:=E\times L^{2}\left(  \left[  -h,0\right]  ,E\right)  $ and $\widetilde
{A}:D\left(  \widetilde{A}\right)  \subset M^{2}\left(  \left[  -h,0\right]
,E\right)  \rightarrow M^{2}\left(  \left[  -h,0\right]  ,E\right)  $ defined
by%
\[
\widetilde{A}\left[
\begin{array}
[c]{c}%
\xi\\
\eta
\end{array}
\right]  =\left[
\begin{array}
[c]{c}%
A\xi+N\eta\left(  -h\right)  +\int_{-h}^{0}M\left(  \theta\right)  \eta\left(
\theta\right)  d\theta\\
\frac{d}{d\theta}\left(  \eta-\xi\right)
\end{array}
\right]  ,
\]
where%
\[
D\left(  \widetilde{A}\right)  =\left\{  \left[
\begin{array}
[c]{c}%
\xi\\
\eta
\end{array}
\right]  :\xi\in E,\ \eta\in L^{2}\left(  \left[  -h,0\right]  ,E\right)
,\ \frac{d}{d\theta}\eta\in L^{2}\left(  \left[  -h,0\right]  ,E\right)
,\ \eta\left(  0\right)  =\xi\right\}  .
\]%
\[
\widetilde{y}=\left[
\begin{array}
[c]{c}%
y\\
\overline{y}%
\end{array}
\right]  ,\ \widetilde{y}_{0}=\left[
\begin{array}
[c]{c}%
\xi\\
\eta
\end{array}
\right]  ,\ \ g\left(  \widetilde{y}\right)  =\left[
\begin{array}
[c]{c}%
\int_{\delta}^{b}h\left(  s,y\left(  s\right)  \right)  ds\\
0
\end{array}
\right]  ,\ \ \widetilde{B}=\left[
\begin{array}
[c]{c}%
B\\
0
\end{array}
\right]  ,\ \ \widetilde{f}=\left[
\begin{array}
[c]{c}%
f\\
0
\end{array}
\right]  .
\]
The system (\ref{ss1}) can be written in the standard form%
\begin{align}
\widetilde{y}^{\prime}\left(  t\right)   &  =\widetilde{A}\widetilde{y}\left(
t\right)  +\widetilde{B}u\left(  t\right)  +\widetilde{f}\left(
t,\widetilde{y}\left(  t\right)  \right)  ,\label{ss2}\\
\widetilde{y}\left(  0\right)   &  =\widetilde{y}_{0}+g\left(  \widetilde
{y}\right)  .\nonumber
\end{align}
Following \cite{delfour}, we say that the system (\ref{ss2}) is

\begin{itemize}
\item (partial-)approximately controllable on $\left[  0,T\right]  $ if
$\overline{\left\{  y\left(  T;u\right)  :u\in L^{2}\left(  \left[
0,T\right]  ,U\right)  \right\}  }=E,$

\item approximately $M^{2}$-controllable on $\left[  0,T\right]  $ if
$\overline{\left\{  \widetilde{y}\left(  T;u\right)  :u\in L^{2}\left(
\left[  0,T\right]  ,U\right)  \right\}  }=M^{2}\left(  \left[  -h,0\right]
,E\right)  .$
\end{itemize}

Assume that $\Phi\left(  t\right)  $ is a solution of the equation%
\begin{align*}
\Phi^{\prime}\left(  t\right)   &  =A\Phi\left(  t\right)  +N\Phi\left(
t-h\right)  +\int_{-h}^{0}M\left(  \theta\right)  \Phi\left(  t+\theta\right)
d\theta,\ t\geq h,\\
\Phi^{\prime}\left(  t\right)   &  =A\Phi\left(  t\right)  ,\ \ t<h,\ \ \Phi
\left(  0\right)  =I.\
\end{align*}
It is shown in \cite{delfour}, \cite{bm2} that the linear system corresponding
to (\ref{ss2}) is partial-approximately controllable if%
\[
B^{\ast}\Phi^{\ast}\left(  t-t\right)  y=0,\ 0<t<T\Longrightarrow y=0.
\]
Hence, the system (\ref{ss2}) is partial-approximately controllable on
$\left[  0,T\right]  $ provided that all conditions of Theorem \ref{thm:main}
are satisfied.

\section{Conclusion}

In this paper, partial-approximate controllability of semilinear evolution
systems in Hilbert spaces with nonlocal conditions have been investigated.
Sufficient condition for the partial-approximate controllability of such
systems have been established. Compared with some existing results, it can be
found that the variational approach, together with the approximating
technique, has been extended to consider the partial-approximate
controllability of more general systems.

The key hypothesis on the nonlocal function $g$ is the assumption (G), which
means that $g$ depends only on the value of $y$ in the interval $\left[
\delta,b\right]  $, $\delta>0$. This hypothesis covers the situation that the
nonlocal function $g$ is given by $g\left(  y\right)  =%
{\displaystyle\sum\limits_{k=1}^{p}}
c_{k}y\left(  t_{k}\right)  $, where $0<t_{1}<...<t_{p}<b,$ $c_{1},...c_{p}$
are given constants or by $g\left(  y\right)  =\int_{\delta}^{b}h\left(
s,y\left(  s\right)  \right)  ds$, $0<\delta<b$. However, if the nonlocal
function $g$ depends on the value of $y$ on the whole interval $\left[
\delta,b\right]  $, such as the nonlocal function $g$ is given by $g\left(
y\right)  =\int_{0}^{b}h\left(  s,y\left(  s\right)  \right)  ds,$ then even
the question of existence of a mild solution and corresponding controllability
issues are still open. For this reason, we are committed to studying such
problems in the future.

\bigskip

\textit{\bigskip}

\end{document}